\def\ro{\rho}
\def\si{\sigma}
\def\vph{\varphi}
\def\ch{\chi}
\def\om{\omega}
\def\cA{{\mathcal A}}
\def\cB{{\mathcal B}}
\def\cF{{\mathcal F}}
\def\cK{{\mathcal K}}
\def\cM{{\mathcal M}}
\def\cN{{\mathcal N}}
\def\cO{{\mathcal O}}
\def\cV{{\mathcal V}}
\def\cX{{\mathcal X}}
\def\ang#1{{\langle #1 \rangle}}
\def\P{{\mathbb P}}
\def\Z{{\mathbb Z}}
\def\N{{\mathbb N}}
\def\m{{\mathfrak m}}
\def\add{\operatorname{add}}
\def\Aut{\operatorname{Aut}}
\def\fchar{\operatorname{char}}
\def\cd{\operatorname{cd}}
\def\Coker{\operatorname{Coker}}
\def\depth{\operatorname{depth}}
\def\dim{\operatorname{dim}}
\def\Ext{\operatorname{Ext}}
\def\gldim{\operatorname{gldim}}
\def\GrAut{\operatorname{GrAut}}
\def\Hom{\operatorname{Hom}}
\def\hdet{\operatorname{hdet}}
\def\injdim{\operatorname{injdim}}
\def\ldim{\operatorname{ldim}}
\def\op{\operatorname{op}}
\def\projdim{\operatorname{projdim}}
\def\Proj{\operatorname{Proj}}
\def\sup{\operatorname{sup}}
\newcommand{\uHom}{\operatorname{\underline{Hom}}\nolimits}
\newcommand{\uEnd}{\operatorname{\underline{End}}\nolimits}
\newcommand{\uExt}{\operatorname{\underline{Ext}}\nolimits}
\newcommand{\uH}{\operatorname{\underline{H}}\nolimits}
\def\RuHom{\operatorname{R\underline{Hom}}}
\def\G{\Gamma}
\def\RuG{\operatorname{R\underline {\G }}}
\newcommand{\uG}{\operatorname{\underline{\Gamma}}\nolimits}
\newcommand{\Lotimes}{\otimes^{\operatorname L}}
\def\add{\mathsf{add}\,}
\def\grmod{\mathsf{grmod}\,}
\def\GrMod{\mathsf{GrMod}\,}
\def\CM#1{\mathsf{CM^{gr}}(#1)}
\def\tails{\mathsf{tails}\,}
\def\Tails{\mathsf{Tails}\,}
\def\coh{\mathsf{coh}\,}
\def\uCM#1{\mathsf{\underline{CM}^{gr}}(#1)}
\def\tors{\mathsf{tors}\,}
\def\Tors{\mathsf{Tors}\,}
\def\D#1{\mathsf{D(}#1\mathsf{)}}
\def\Db#1{\mathsf{D^b(}#1\mathsf{)}}
\def\mod{\mathsf{mod}\,}
\def\rnum#1{\expandafter{\romannumeral #1}}
\def\Rnum#1{\uppercase\expandafter{\romannumeral #1}}
\theoremstyle{plain} 
\newtheorem{thm}{Theorem}[section]
\newtheorem{cor}[thm]{Corollary}
\newtheorem{lem}[thm]{Lemma}
\newtheorem{prop}[thm]{Proposition}
\theoremstyle{definition}
\newtheorem{dfn}[thm]{Definition}
\newtheorem{ex}[thm]{Example}
\newtheorem{ques}[thm]{Question}
\theoremstyle{remark}
\newtheorem{rem}[thm]{Remark}
\newcommand{\thmref}[1]{Theorem~\ref{#1}}
\newcommand{\lemref}[1]{Lemma~\ref{#1}}
\newcommand{\corref}[1]{Corollary~\ref{#1}}
\newcommand{\propref}[1]{Proposition~\ref{#1}}
\newcommand{\exref}[1]{Example~\ref{#1}}
\newcommand{\remref}[1]{Remark~\ref{#1}}
\newcommand{\quesref}[1]{Question~\ref{#1}}
\numberwithin{equation}{section}
\def\da{\dagger}
\begin{document}

\title
[Cluster tilting modules and noncommutative projective schemes]
{Cluster tilting modules and noncommutative projective schemes}
\author{Kenta Ueyama}
\address{
Department of Mathematics, 
Faculty of Education,
Hirosaki University, 
1 Bunkyocho, Hirosaki, Aomori 036-8560, Japan}
\email{k-ueyama@hirosaki-u.ac.jp} 
\thanks{The author was supported by JSPS Grant-in-Aid for Young Scientists (B) 15K17503.}

\subjclass[2010]{Primary: 16S38\; Secondary: 14A22, 16G50, 16W50}
\keywords{noncommutative projective scheme, cluster tilting module, AS-Gorenstein algebra,
AS-regular algebra, ASF-regular algebra}

\begin{abstract}
In this paper, we study the relationship between equivalences of noncommutative projective schemes
and cluster tilting modules.
In particular, we prove the following result.
Let $A$ be an AS-Gorenstein algebra of dimension $d\geq 2$
and ${\mathsf{tails}\,} A$ the noncommutative projective scheme associated to $A$.
If $\operatorname{gldim}({\mathsf{tails}\,} A)< \infty$ and
$A$ has a $(d-1)$-cluster tilting module $X$ satisfying that its graded endomorphism algebra is $\mathbb N$-graded,
then the graded endomorphism algebra $B$ of a basic $(d-1)$-cluster tilting submodule of $X$
is a two-sided noetherian $\mathbb N$-graded AS-regular algebra over $B_0$ of global dimension $d$
such that ${\mathsf{tails}\,} B$ is equivalent to ${\mathsf{tails}\,} A$.
\end{abstract}

\maketitle

\section{Introduction}

In \cite{AZ}, Artin and Zhang introduced the notion of a noncommutative projective scheme, and 
established a fundamental and comprehensive theory of noncommutative projective schemes.
Since the study of the categories of coherent sheaves on commutative projective schemes
(or their derived categories) is of increasing importance in algebraic geometry,
the study of noncommutative projective schemes has been one of the most major projects
in noncommutative projective geometry.

Let $A, A'$ be right noetherian graded algebras,
and $\tails A, \tails A'$ the noncommutative projective schemes associated to $A$ and $A'$ respectively.
Clearly, if $A \cong A'$ as graded algebras, then $\tails A \cong \tails A'$.
It is well-known that the converse does not hold, so the following question is natural to ask. %and important

\begin{ques} \label{ques}
Given a right noetherian graded algebra $A$, can we find a better homogeneous coordinate ring of $\tails A$?
That is, can we find a better graded algebra $B$ (e.g. $\gldim B < \infty$) such that
$\tails B \cong \tails A$?
\end{ques}

For example, if we consider the commutative graded algebra $A = k[x,y,z,w]/(xw-yz)$,
then $\tails A \ (\cong \coh \P^1 \times \P^1)$ is not equivalent to $\tails k[x_1,\dots,x_n] \ (\cong \coh \P^{n-1})$,
but we can find the noncommutative graded algebra $B = k\ang{x,y}/(x^2y-yx^2, y^2x-xy^2)$ of global dimension $3$ such that
$\tails A \cong \tails B$, so the above question has a significant meaning in noncommutative projective geometry.

The purpose of this paper is to give an answer to \quesref{ques} by investigating cluster tilting modules.
Cluster tilting modules are crucial in the study of higher-dimensional analogues of Auslander-Reiten theory,
and also attract attention from the viewpoint of Van den Bergh's noncommutative crepant resolutions.
In particular, cluster tilting modules have been extensively studied for a certain class of algebras, called orders,
including commutative Cohen-Macaulay rings and finite dimensional algebras (see \cite{I}).
One of the motivations of this paper is to develop cluster tilting theory for non-orders
in terms of noncommutative projective geometry.

The main result of this paper is as follows.
Let $A$ be a two-sided noetherian connected graded algebra satisfying $\ch$
and let $X$ be a finitely generated graded right $A$-module.
If $A$ is an AS-Gorenstein algebra of dimension $d\geq 2$
and $X$ is a $(d-1)$-cluster tilting module satisfying some additional conditions,
then the graded endomorphism algebra $B = \uEnd_A(X)$ is a two-sided noetherian ASF-regular algebra of global dimension $d$
such that the functors
\begin{align*}
&\tails B \to \tails A \;\text{induced by}\; -\otimes_B X, \;\text{and}\\
&\tails B^{\op} \to \tails A^{\op} \;\text{induced by}\; \uHom_A(X,A) \otimes_B -
\end{align*}
are equivalences (\thmref{thm:main} (1)).
Moreover a certain converse statement also holds (\thmref{thm:main} (2)).
As a corollary of this result, we can give an answer to \quesref{ques} as follows.

\begin{thm} \textnormal{(\corref{cor:main})} \label{thm1}
Let $A$ be an AS-Gorenstein algebra of dimension $d\geq 2$.
If $\gldim(\tails A)< \infty$ and $A$ has a $(d-1)$-cluster tilting module $X$ satisfying that its graded endomorphism algebra is $\N$-graded,
then the graded endomorphism algebra $B$ of a basic $(d-1)$-cluster tilting submodule of $X$
is a two-sided noetherian $\N$-graded AS-regular algebra over $B_0$ of global dimension $d$
such that $\tails B \cong \tails A$.
\end{thm}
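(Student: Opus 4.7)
My plan is to deduce this corollary from Theorem~\ref{thm:main}(1) by first reducing to a basic $(d-1)$-cluster tilting submodule, then applying the main theorem, and finally interpreting the ASF-regular conclusion as AS-regular over $B_0$.

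First, I would construct the basic summand. Writing $X$ as a direct sum of indecomposable graded modules (counted up to degree shift) and selecting one representative from each isomorphism class yields a direct summand $X_0$ with $\add X_0=\add X$, so $X_0$ inherits the $(d-1)$-cluster tilting property from $X$. Moreover $B:=\uEnd_A(X_0)$ is the corner ring $e\,\uEnd_A(X)\,e$ for a homogeneous idempotent $e$, so the $\mathbb N$-grading on $\uEnd_A(X)$ restricts to an $\mathbb N$-grading on $B$. Thus all the standing hypotheses transfer from $X$ to $X_0$.

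Next I would apply Theorem~\ref{thm:main}(1) to $X_0$. The two explicit assumptions $\gldim(\tails A)<\infty$ and that $B$ is $\mathbb N$-graded should realize the ``additional conditions'' referenced in the statement of the main theorem; verifying this match is the principal bookkeeping step. The main theorem then produces $B$ as a two-sided noetherian ASF-regular algebra of global dimension $d$, together with the equivalences $\tails B\cong \tails A$ and $\tails B^{\op}\cong\tails A^{\op}$ induced by $-\otimes_B X_0$ and $\uHom_A(X_0,A)\otimes_B -$.

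The final and likely most delicate step is to upgrade ``ASF-regular, $\mathbb N$-graded, basic'' to ``AS-regular over $B_0$ of global dimension $d$''. For a basic $X_0$, the degree zero part $B_0$ should decompose as a product of the local endomorphism rings of the indecomposable summands of $X_0$, which together with the $\mathbb N$-grading equips $B$ with the structure of a graded algebra over $B_0$ in the sense required for AS-regularity over $B_0$. The Artin--Schelter Ext-vanishing and Gorenstein-duality conditions packaged into ASF-regularity then translate into the defining conditions for AS-regularity over $B_0$, while the global dimension $d$ and the tails equivalences are supplied directly by the main theorem. The main obstacle is producing the precise dictionary between the two notions in the $\mathbb N$-graded setting; once that dictionary is available, the corollary follows.
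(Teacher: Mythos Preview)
Your overall route---extract a basic $(d-1)$-cluster tilting summand, apply \thmref{thm:main}(1), then translate ASF-regular into AS-regular over $B_0$---is exactly the paper's. Your handling of the first two steps is correct: basicness gives the ``$\nu$-stable or basic'' hypothesis, the $\N$-grading of $B$ is {\bf (A3)}, and \remref{rem:A4} replaces {\bf (A4)} by $\gldim(\tails A)<\infty$.

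The only correction concerns your third step. You call it the ``most delicate'' and propose to analyze $B_0$ as a product of local endomorphism rings of the indecomposable summands. That structural claim is not right in general (there can be nonzero degree-zero morphisms between non-isomorphic indecomposable summands, so $B_0$ need not split as such a product), and in any case it is unnecessary. The paper already contains the required dictionary as \thmref{thm:com}: any two-sided noetherian ASF-regular algebra is AS-regular over $B_0$, with the same dimension and Gorenstein parameter. The proof goes via local duality (\lemref{lem:ld}) applied to $B_0$, not via any decomposition of $B_0$. So once \thmref{thm:main}(1) delivers $B$ as two-sided noetherian ASF-regular of dimension $d$ together with $\tails B\cong\tails A$, a single citation of \thmref{thm:com} finishes the argument.
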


We note that the notions of ASF-regular and AS-regular over $R$ were recently introduced by Minamoto and Mori \cite{MM},
and these are natural generalizations of AS-regular algebras for $\N$-graded (not necessarily connected graded) algebras.
%% (see \cite{MM})
A comparison theorem for these algebras is described in \thmref{thm:com} (see also \corref{cor:com}).

\section{Preliminaries}

Throughout, let $k$ be a field.
A graded $k$-vector space $V=\bigoplus_{i\in\Z}V_i$ is called locally finite if $\dim_k V_i< \infty$ for all $i \in \Z$,
and it is called left (resp. right) bounded if $V_i =0$ for all $i \ll 0$ (resp. $i \gg 0$).
We denote by $DV=\uHom_k(V,k)$ the graded vector space dual of a locally finite graded $k$-vector space $V$.

In this paper, a graded algebra means a $\Z$-graded algebra over $k$ unless otherwise stated.
For a graded algebra $A$, we denote by $\GrMod A$ the category of graded right $A$-modules with $A$-module homomorphisms of degree $0$,
and by $\grmod A$ the full subcategory consisting of finitely generated graded $A$-modules.
Note that if $A$ is right noetherian, then $\grmod A$ is an abelian category.
We denote by $A^{\op}$ the opposite algebra of $A$,
and by $A^e = A^{\op} \otimes_k A$ the enveloping algebra.
The category of graded left $A$-modules is identified with $\GrMod A^{\op}$, and
the category of graded $A$-$A$ bimodules is identified with $\GrMod A^e$.

For a graded module $M \in \GrMod A$ and an integer $n\in \Z$,
we define the shift $M(n)\in \GrMod A$ by $M(n)_i:=M_{n+i}$ for $i\in \Z$.
Note that the rule $M\mapsto M(n)$ is a $k$-linear autoequivalence for $\GrMod A$ and $\grmod A$, called the shift functor.
For $M, N\in \GrMod A$, we write $\Ext^i_{\GrMod A}(M, N)$ for the extension group in $\GrMod A$, and define
$$\uExt^i_A(M, N):=\bigoplus _{i\in \Z}\Ext^i_{\GrMod A}(M, N(i)).$$

Let $A$ be a right noetherian locally finite $\N$-graded algebra.
For $M \in \GrMod A$ and an integer $n\in \Z$,
we define the truncated submodule $M_{\geq n} \in \GrMod A$ by $M_{\geq n} :=\bigoplus _{i\geq n}M_i$.
We say that an element $x$ of a graded module $M \in \GrMod A$ is torsion if there exists a positive integer $n$ such that $xA_{\geq n} = 0$.
We denote by $t(M)$ the submodule of $M$ consisting of all torsion elements.
A graded module $M \in \GrMod A$ is called torsion if $M=t(M)$, and torsion-free if $t(M)=0$.
We denote by $\Tors A$ (resp. $\tors A$) the full subcategory of $\GrMod A$ (resp. $\grmod A$) consisting of torsion modules.
One can define the Serre quotient categories
\[ \Tails A = \GrMod A / \Tors A \quad \text{and} \quad  \tails A = \grmod A / \tors A. \]
Note that $\tails A$ is the full subcategory of noetherian objects of $\Tails A$.
The quotient functor is denoted by $\pi : \GrMod A \to \Tails A$.
%%and its right adjoint is denoted by $\om : \Tails A \to \GrMod A$ so that $\pi\om \cong \Id_{\Tails A}$
We often denote by $\cM = \pi M \in \Tails A$ the image of $M \in \GrMod A$.
Note that the shift functor preserves torsion modules,
so it induces a $k$-linear autoequivalence $\cM \mapsto \cM(n)$ for $\Tails A$ and $\tails A$, again called the shift functor.
For $\cM, \cN\in \Tails A$, we write $\Ext^i_{\Tails A}(\cM, \cN)$ for the extension group in $\Tails A$, and define
$$\uExt^i_\cA(\cM, \cN):=\bigoplus _{i\in \Z}\Ext^i_{\Tails A}(\cM, \cN(i)).$$
See \cite[Section 7]{AZ} for details on Ext-groups in $\tails A$.
Following Serre's theorem and the Gabriel-Rosenberg reconstruction theorem, $\tails A$ is called the noncommutative projective scheme associated to $A$ 
(see \cite {AZ} for details).
We define the global dimension of $\tails A$ by
$$ \gldim (\tails A) = \sup\{i \mid \Ext^i_{\tails A}(\cM, \cN) \neq0\; \text{for some}\; \cM, \cN \in \tails A \}.$$
If $\gldim A<\infty$, then it is clear  that $\gldim(\tails A)<\infty$.
The condition $\gldim (\tails A)<\infty$ is considered as a noncommutative graded isolated singularity property (see \cite{U5}, \cite{U6}, \cite{MU1}, \cite{MU2}).

Recall that we say that $\chi_i$ holds for $A$ if $\uExt^j_A(A/A_{\geq 1}, M)$ is finite dimensional over $k$ for every $M \in \grmod A$ and every $j\leq i$,
and we say that $\chi$ holds for $A$ if $\chi_i$ holds for every $i$.
The condition $\chi_i$ plays an essential role in the study of the noncommutative projective schemes $\tails A$ (see \cite {AZ}, \cite{YZ} for details).

We call $({\mathsf C}, \cO, s)$ an algebraic triple
if it consists of a $k$-linear abelian category ${\mathsf C}$,
an object $\cO\in {\mathsf C}$,
and a $k$-linear autoequivalence $s\in \Aut _k{\mathsf C}$.

\begin{dfn} (\cite{AZ})
Let $({\mathsf C}, \cO, s)$ be an algebraic triple.  We say that the pair
$(\cO, s)$ is ample for ${\mathsf C}$ if
\begin{itemize}
\item[(Am1)] for every object $\cM\in {\mathsf C}$, there are positive integers
$r_1, \dots , r_p\in \N^+$ and an epimorphism $\bigoplus
_{i=1}^ps^{-r_i}\cO\to \cM$ in ${\mathsf C}$, and
\item[(Am2)] for every epimorphism $\cM\to \cN$ in ${\mathsf C}$, there is an
integer $n_0$ such that the induced map $\Hom _{\mathsf C}(s^{-n}\cO,
\cM)\to \Hom _{\mathsf C}(s^{-n}\cO, \cN)$ is surjective for every $n\geq
n_0$.
\end{itemize}
\end{dfn}

We define the graded algebra associated to an algebraic triple $({\mathsf C}, \cO, s)$ by
$B({\mathsf C}, \cO, s):=\bigoplus _{i\in \Z}\Hom_{\mathsf C}(\cO, s^i\cO)$.
Moreover, for any object $\cM\in {\mathsf C}$,
it is known that
$\bigoplus _{i\in \Z}\Hom_{\mathsf C}(\cO, s^i\cM)$ has a natural graded right $B({\mathsf C}, \cO, s)$-module structure.

\begin{thm} \textnormal{(\cite[Corollary 4.6 (1)]{AZ})} \label{thm.AZ}
Let $({\mathsf C}, \cO, s)$ be an algebraic triple.
If $\cO\in {\mathsf C}$ is a noetherian object, $\dim _k\Hom_{\mathsf C}(\cO, \cM)<\infty$ for all $\cM\in {\mathsf C}$, and $(\cO, s)$ is ample for ${\mathsf C}$, then
$B=B({\mathsf C}, \cO, s)_{\geq 0}$ is a right noetherian locally finite $\N$-graded algebra satisfying $\chi_1$, and
the functor
$${\mathsf C}\to \tails B; \quad \cF\mapsto \pi \left(\bigoplus _{i\in \N}\Hom_{\mathsf C}(\cO, s^i\cF)\right)$$
induces an equivalence of algebraic triples $({\mathsf C}, \cO, s)\to (\tails B, \cB, (1))$.
\end{thm}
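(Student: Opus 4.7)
The plan is to carry out a Serre-style reconstruction: define the functor
\[ \Gamma_* \colon \mathsf{C} \to \GrMod B, \qquad \cF \mapsto \bigoplus_{i \geq 0} \Hom_\mathsf{C}(\cO, s^i \cF), \]
so that $\Gamma_*(\cO) = B$ tautologically, and then show that the composite $\pi \circ \Gamma_*$ is the claimed equivalence. The two ampleness axioms will be invoked exactly where one needs a ``generation'' statement (Am1) or a ``Serre vanishing''-type statement (Am2).

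First I would pin down the ring-theoretic properties of $B$. Local finiteness is immediate from $\dim_k \Hom_\mathsf{C}(\cO, s^i \cO) < \infty$. For right noetherianness, the strategy is to translate an ascending chain of finitely generated graded right ideals of $B$, via (Am1), into an ascending chain of subobjects of a finite direct sum of shifts of $\cO$ in $\mathsf{C}$; since $\cO$ is noetherian, that chain stabilizes, and one recovers stabilization back in $B$. For $\chi_1$, one checks that $\uHom_B(B/B_{\geq 1}, M)$ is finite-dimensional for every finitely generated graded $M$, which is the module-theoretic shadow of (Am2): torsion elements in a finitely generated module come in bounded degree precisely because of the uniform $n_0$ supplied by (Am2) applied to a chosen presentation.

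Next, I would establish the equivalence $\pi \Gamma_* \colon \mathsf{C} \to \tails B$. For essential surjectivity, every object of $\tails B$ comes from a finitely generated graded $B$-module, hence is presented by shifts of $\cB = \pi B$; via (Am1) in $\mathsf{C}$, such a presentation lifts to one by shifts of $\cO$, producing a preimage. Fullness and faithfulness reduce to the identity
\[ \Hom_{\tails B}(\pi \Gamma_* \cF,\; \pi \Gamma_* \cG) \;=\; \varinjlim_n \Hom_{\GrMod B}\bigl(\Gamma_*(\cF)_{\geq n},\;\Gamma_*(\cG)\bigr) \;=\; \Hom_\mathsf{C}(\cF, \cG), \]
where the second equality is where (Am2) does the real work: for $n \gg 0$ it guarantees that maps from $s^{-n}\cO$ detect all morphisms and that no extra morphisms appear. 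Finally, the triple structure is compatible: $\cO \mapsto \pi B = \cB$ by construction, and the shift $s$ corresponds to the degree-shift $(1)$ because $\Gamma_*(s \cF)$ and $\Gamma_*(\cF)(1)$ agree in all degrees $\geq 0$ and differ only by a bounded (hence torsion) piece.

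The main obstacle is the simultaneous control needed for $\chi_1$ and for the fullness/faithfulness step: both require that morphism spaces computed in $\mathsf{C}$ and in $\tails B$ agree after truncation, and (Am2) delivers this only in a quantifier-sensitive form (the threshold $n_0$ a priori depends on the epimorphism). Threading this uniformity through the direct-limit calculation is the real technical heart of the argument; once it is in place, noetherianness, essential surjectivity, and compatibility of the triple data all follow routinely.
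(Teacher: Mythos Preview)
The paper does not supply its own proof of this theorem: it is quoted verbatim as \cite[Corollary 4.6 (1)]{AZ} and used as a black box throughout. So there is nothing in the paper to compare your argument against.

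That said, your sketch is a faithful outline of the original Artin--Zhang argument. One small imprecision: for $\chi_1$ you only mention $\uHom_B(B/B_{\geq 1}, M)$, but by the paper's definition $\chi_1$ requires finite-dimensionality of $\uExt^j_B(B/B_{\geq 1}, M)$ for $j=0$ \emph{and} $j=1$; the $j=1$ case is the one that really uses (Am2) (it is essentially the statement that the cokernel of the comparison map $M \to \Gamma_*(\pi M)$ is bounded). Also, your noetherianness step hides real work: in \cite{AZ} this goes through showing first that $\Gamma_*$ carries noetherian objects to coherent modules, which is itself a nontrivial consequence of ampleness. None of this is wrong in spirit, but if you intend this as more than a sketch you should flag these as the points where the actual labor occurs.
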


Let $A$ be an $\N$-graded algebra.
Then the augmentation ideal $A_{\geq 1}$ is denoted by $\m$.
We define the functor
$\uG_{\m}: \GrMod A \to \GrMod A$ by
$\uG_{\m}(-)= \lim _{n \to \infty}\uHom_A(A/A_{\geq n}, -).$
The derived functor of $\uG_{\m}$ is denoted by $\RuG_{\m}(-)$, and
its cohomologies are denoted by $\uH^i_\m(-)= h^i(\RuG_{\m}(-))$.
For a graded module $M \in \GrMod A$, we define
\[
\depth M =\inf \{i\mid \uH^i_\m(M) \neq 0 \}\;\; \text{and} \;\; \ldim M=\sup \{i\mid \uH^i_\m(M) \neq 0\}.
\]
See \cite{Ydc}, \cite{Vet} for basic properties of $\RuG_\m(-)$.
%%\cite{Jlc},

If $A$ is an $\N$-graded algebra with $A_0=k$, then $A$ is called connected graded.
Note that a right noetherian connected graded algebra is locally finite.

\begin{dfn}
A two-sided noetherian connected graded algebra $A$ is called
AS-Gorenstein (resp.\, AS-regular) of dimension $d$ and of Gorenstein parameter $\ell$ if
\begin{itemize}
\item{} $\injdim_A A = \injdim_{A^{\op}} A= d <\infty$ (resp.\, $\gldim A = d<\infty$), and
\item{} $\uExt^i_A(k ,A) \cong \uExt^i_{A^{\op}}(k, A) \cong
\begin{cases}
k(\ell) & \text { if } i=d ,\\
0 & \text { if } i\neq d.
\end{cases}$
\end{itemize}
\end{dfn}

For $M \in \GrMod A$ and a graded algebra automorphism $\si \in \GrAut A$,
we define the twist $M_\si \in \GrMod A$ by $M_\si = M$ as a graded $k$-vector space
with the new right action $m*a = m\si(a)$.
Let $A$ be an AS-Gorenstein algebra.
Then it is well-known that $A$ has a balanced dualizing complex
$D\RuG_\m(A) \cong D\RuG_{\m^{\op}}(A) \cong A_\nu(-\ell)[d]$ in $\D{\GrMod A^e}$
with some graded algebra automorphism $\nu \in \GrAut A$.
This graded algebra automorphism $\nu \in \GrAut A$ is called the generalized Nakayama automorphism.
We define $\om_A := A_\nu(-\ell) \in \GrMod A^e$.

\begin{dfn}
Let $A$ be an AS-Gorenstein algebra of dimension $d$, and $M \in \grmod A$.
Then $M$ is called a graded maximal Cohen-Macaulay module if $\depth M = \ldim M=d$.
\end{dfn}

Let $A$ be an AS-Gorenstein algebra. Then $A$ is a graded maximal Cohen-Macaulay $A$-module.
It is well-known that $M \in \grmod A$ is graded maximal Cohen-Macaulay if and only if $\uExt^i_A(M,A)=0$ for all $i \neq 0$.
See \cite{Mcm} for basic properties of maximal Cohen-Macaulay modules.

We write $\CM A$ for the full subcategory of $\grmod A$ consisting of graded maximal Cohen-Macaulay modules.
If $M \in \grmod A$, then we define $M^\da = \uHom_A(M,A) \in \grmod A^{\op}$.
Similarly, if $N \in \grmod A^{\op}$, then we define $N^\da = \uHom_{A^{\op}}(N,A) \in \grmod A$.
It is well-known that the contravariant functors
\begin{align} \label{odual}
\xymatrix@C=4pc@R=1pc{ \CM A \ar@<0.6ex>[r]^{(-)^\da} &\CM {A^{\op}} \ar@<0.6ex>[l]^{(-)^\da}}
\end{align}
define a duality.
For $M \in \CM A$, we put $B=\uEnd_A(M), C=\uEnd_{A^{\op}}(M^\da)$. Then
\begin{align} \label{opiso}
 C \cong \uEnd_{A^{\op}}(M^\da) \cong \uEnd_{A}(M)^{\op} \cong B^{\op}
\end{align}
as graded algebras by the above duality.

The following theorem, called the maximal Cohen-Macaulay approximation theorem, plays a key role in this paper.

\begin{thm} \label{thm:CMapp}
Let $A$ be an AS-Gorenstein algebra.
For any $M \in \grmod A$, there exists a short exact sequence 
\[\xymatrix@C=1pc@R=1pc{
0 \ar[r] &L \ar[r] &Z \ar[r] & M \ar[r] &0} \]
in $\grmod A$ such that $Z \in \CM A$ and $\injdim_A L < \infty$.
Moreover $\uExt_A^i(X,L)=0$ holds for any $X \in \CM A$ and any $i\geq 1$.
\end{thm}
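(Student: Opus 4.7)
\emph{Plan.} This is the graded AS-Gorenstein version of the classical Auslander--Buchweitz maximal Cohen--Macaulay approximation theorem, and I would prove it by adapting their argument.

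I would first establish the key Ext-characterization: $M\in\CM A$ if and only if $\uExt^i_A(M,A)=0$ for all $i\geq 1$. Indeed, local duality $D\uH^i_\m(M)\cong\uExt^{d-i}_A(M,\omega_A)$ together with $\omega_A\cong A_\nu(-\ell)$ gives $M\in\CM A$ (i.e.\ $\depth M=d$) iff $\uH^i_\m(M)=0$ for $i<d$ iff $\uExt^i_A(M,A)=0$ for $i\geq 1$. Consequently the $d$-th syzygy $\Omega^d M$ of any $M\in\grmod A$ lies in $\CM A$, and $\CM A$ is closed under extensions and syzygies.

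Next I would prove the ``moreover'' orthogonality $\uExt^i_A(X,L)=0$ for $X\in\CM A$, $\injdim_A L<\infty$, and $i\geq 1$. Over AS-Gorenstein, finite injective dimension coincides with finite projective dimension (both bounded by $d$), so I may induct on $\projdim L$: the base case $L$ free (or a direct sum of shifts of $A$) is the Ext-characterization above, and the inductive step uses a short exact sequence $0\to\Omega L\to F\to L\to 0$ with $F$ free together with the long exact sequence of $\uExt^*_A(X,-)$.

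For the main existence statement, given $M$, take a projective resolution $\cdots\to P_1\to P_0\to M\to 0$ with $d$-th syzygy $K=\Omega^d M\in\CM A$, and write $M_j:=\Omega^j M$ so that there are short exact sequences $0\to M_{j+1}\to P_j\to M_j\to 0$. By descending induction on $j$ from $d$ down to $0$, I would construct approximations $0\to L_j\to Z_j\to M_j\to 0$ with $Z_j\in\CM A$ and $\injdim_A L_j<\infty$, starting from $Z_d=K$, $L_d=0$. For the inductive step, splicing the induction hypothesis $0\to L_{j+1}\to Z_{j+1}\to M_{j+1}\to 0$ with the resolution short exact sequence produces a $4$-term exact sequence
\[
0\to L_{j+1}\to Z_{j+1}\to P_j\to M_j\to 0,
\]
which is then compressed into a genuine $3$-term short exact sequence by a standard Auslander--Buchweitz pushout, using the projectivity of $P_j$ (so $\uExt^i_A(P_j,L_{j+1})=0$ for $i\geq 1$) together with the orthogonality from the previous step. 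The resulting $Z_j$ is an extension of modules in $\CM A$ (hence in $\CM A$ by closure under extensions), and $L_j$ is an extension of modules of finite injective dimension (hence still of finite injective dimension). Setting $j=0$ yields the required short exact sequence.

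The main obstacle is this compression in the inductive step: the $4$-term exact sequence represents a class in $\uExt^2_A(M_j,L_{j+1})$ that is in general nonzero, so it cannot be reduced to a $3$-term sequence with the same left-hand term. One must therefore enlarge $L_{j+1}$ in a controlled way to a new $L_j$ of finite injective dimension so that the pushed-forward class in $\uExt^2_A(M_j,L_j)$ vanishes; projectivity of $P_j$ and the orthogonality of Step~2 ensure that this enlargement can be chosen so the new middle term $Z_j$ remains maximal Cohen--Macaulay.
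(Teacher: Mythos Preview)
The paper gives no argument here: its entire proof is the sentence ``This follows from \cite[Proposition~5.3]{Mcm} and \cite[Lemma~3.5]{U6}.'' Your plan to reprove the result directly via Auslander--Buchweitz is therefore more than the paper itself offers, and is presumably what those references carry out. Steps~1 and~2 are correct as stated (the equivalence of finite injective and finite projective dimension over an AS-Gorenstein algebra is standard, so the induction on $\projdim L$ is legitimate).

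The gap is in the inductive step of Step~3. From the two sequences $0\to L_{j+1}\to Z_{j+1}\to M_{j+1}\to 0$ and $0\to M_{j+1}\to P_j\to M_j\to 0$ there is no single pushout or pullback that produces the desired $0\to L_j\to Z_j\to M_j\to 0$: the only available map $Z_{j+1}\to M_{j+1}$ points the wrong way, and the pushout along $M_{j+1}\hookrightarrow P_j$ of the \emph{first} sequence yields $0\to Z_{j+1}\to E\to M_j\to 0$ with $E$ of finite injective dimension, i.e.\ the roles reversed. Your last paragraph correctly flags the obstruction, but ``enlarge $L_{j+1}$ so that the $\uExt^2$-class vanishes'' is not yet a construction, and neither the projectivity of $P_j$ nor the orthogonality of Step~2 by itself supplies the enlargement. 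What is actually needed is the \emph{CM co-syzygy}: since $Z_{j+1}\in\CM A$, dualizing a free presentation of $Z_{j+1}^\dagger\in\CM{A^{\op}}$ gives an exact sequence $0\to Z_{j+1}\to F\to Z_{j+1}'\to 0$ with $F$ free and $Z_{j+1}'\in\CM A$. Taking $Z_j$ to be the cokernel of the diagonal $Z_{j+1}\hookrightarrow F\oplus P_j$, one checks that $Z_j$ sits in $0\to P_j\to Z_j\to Z_{j+1}'\to 0$ (hence $Z_j\in\CM A$) and in $0\to F/L_{j+1}\to Z_j\to M_j\to 0$ (hence $L_j:=F/L_{j+1}$ has finite injective dimension). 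This co-syzygy step is the concrete ingredient your sketch is missing.
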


\begin{proof}
This follows from \cite[Proposition 5.3]{Mcm} and \cite[Lemma 3.5]{U6}.
\end{proof}

Recently, Minamoto and Mori \cite{MM} introduced the two notions of an $\N$-graded (not necessarily connected graded)
AS-regular algebra.

\begin{dfn} \cite[Definition 3.1]{MM}
A locally finite $\N$-graded algebra $B$ is called
AS-regular over $R=B_0$ of dimension $d$ and of Gorenstein parameter $\ell$ if
\begin{itemize}
\item{} $\gldim B = \gldim B^{\op} = d<\infty$, and
\item{} $\RuHom_B(B_0,B) \cong \RuHom_{B^{\op}}(B_0,B) \cong (DB_0)(\ell)[-d]$ in $\D{\GrMod B_0}$ and in $\D{\GrMod {B_0}^{\op}}$.
\end{itemize}
\end{dfn}

\begin{rem}
For the purpose of this paper, we do not require $\gldim B_0 < \infty$.
\end{rem}

\begin{dfn} \cite[Definition 3.9]{MM}
A locally finite $\N$-graded algebra $B$ is called
ASF-regular of dimension $d$ and of Gorenstein parameter $\ell$ if
\begin{itemize}
\item{} $\gldim B = \gldim B^{\op} = d<\infty$, and
\item{} $\RuG_{\m}(B) \cong \RuG_{\m^{\op}}(B) \cong (DB)(\ell)[-d]$ in $\D{\GrMod B}$ and in $\D{\GrMod B^{\op}}$.
\end{itemize}
\end{dfn}

By expanding the notion of an AS-regular algebra to $\N$-graded algebras,
Minamoto and Mori \cite{MM} gave a nice correspondence between $\N$-graded AS-regular algebras over $R$ of dimension $d$
with $\gldim R<\infty$ and quasi-Fano algebras of global dimension $d-1$.
This result provides a strong connection between noncommutative projective geometry
and representation theory of finite dimensional algebras. See \cite{HIO}, \cite{MM}, \cite{Mrm} for details.

At the end of this section, we give a comparison theorem for the two notions of AS-regular algebras.

\begin{lem} \label{lem:ld}
Let $B$ be a two-sided noetherian ASF-regular algebra, and $C$ a two-sided noetherian locally finite $\N$-graded algebra.
Then for any $M \in \GrMod C^{\op} \otimes B$,
\[ D\RuG_{\m}(M) \cong \RuHom_B(M,D\RuG_{\m}(B))  \]
in $\D{\GrMod B^{\op} \otimes C}$.
\end{lem}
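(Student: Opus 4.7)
The plan is to recognize the desired isomorphism as an instance of graded local duality in the style of Van den Bergh--Yekutieli, once the ASF-regular hypothesis on $B$ is unpacked to supply a balanced dualizing complex. By definition, ASF-regularity gives $\RuG_\m(B) \cong \RuG_{\m^{\op}}(B) \cong (DB)(\ell)[-d]$ in $\D{\GrMod B}$ and $\D{\GrMod B^{\op}}$, so applying $D$ yields $R := D\RuG_\m(B) \cong D\RuG_{\m^{\op}}(B)$. Because $B$ has finite global dimension on both sides, $R$ has finite graded injective dimension on both sides, and the balance condition $\RuG_\m(R) \cong \RuG_{\m^{\op}}(R) \cong DB$ is immediate from $DDB \cong B$ (using local finiteness of $B$). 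Thus $R$ is a balanced dualizing complex for $B$ in the sense of \cite{Vet}.

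With $R$ identified as a balanced dualizing complex, graded local duality delivers the one-sided form of the statement: for every $M \in \D{\GrMod B}$,
\[ D\RuG_\m(M) \cong \RuHom_B(M, R). \]
The standard derivation is to construct a natural comparison morphism and verify that it is an isomorphism first on $M = B$ (where both sides tautologically equal $R$), then on all of $\Db{\grmod B}$ using that $\gldim B = d < \infty$ makes every finitely generated module a perfect complex, and finally on arbitrary $M$ by writing $M$ as a filtered colimit of its finitely generated submodules. The last step is legitimate because $\uG_\m = \lim_n \uHom_B(B/B_{\geq n}, -)$ commutes with filtered colimits in its argument (each $B/B_{\geq n}$ being finitely presented), while the graded $k$-dual $D$ intertwines filtered colimits with cofiltered limits.

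It remains to upgrade the isomorphism from $\D{\GrMod B}$ to $\D{\GrMod B^{\op} \otimes C}$. The left $C$-action on $M$ commutes with every $B$-operation: $\uG_\m$ uses only the right $B$-structure, so $\RuG_\m(M)$ retains the $C$-action, and $k$-duality converts the resulting left $C$-module structure into a right $C$-module structure on $D\RuG_\m(M)$. On the other side, $\RuHom_B(M, R)$ inherits a left $B$-action from $R$ and a right $C$-action from contravariance in $M$; naturality of the local duality comparison in $M$ then forces the isomorphism to be $(B^{\op} \otimes C)$-linear. The main obstacle I expect is not the algebra but the bookkeeping of these commuting actions through the derived functors; a clean way to dispose of this is to fix a $k$-flat resolution of $M$ in $\GrMod C^{\op} \otimes B$ and build all morphisms at the chain level there, so that the $C$-equivariance is automatic throughout.
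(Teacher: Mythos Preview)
Your proposal is correct and follows the same route as the paper: both recognize the isomorphism as Van den Bergh's graded local duality \cite[Theorem~5.1]{Vet}, transported from the connected to the noetherian locally finite $\N$-graded setting (the paper points to \cite[Remark~3.6]{RRZ} and \cite[Lemma~3.2~(1)]{RRZ2} for this passage). Where the paper gives a one-line citation, you spell out why $D\RuG_\m(B)$ plays the role of a dualizing object and how the auxiliary $C$-action is carried by naturality, but this is elaboration of the same argument rather than a different one.
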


\begin{proof}
Van den Bergh \cite{Vet} gave a theory on local duality for connected graded algebras.
One can check that the results in \cite[Sections 3--6]{Vet} hold with no essential
change for a noetherian locally finite $\N$-graded algebra
(see also \cite[Remark 3.6]{RRZ}, \cite[Lemma 3.2 (1)]{RRZ2}).
This follows from the locally finite $\N$-graded version of \cite[Theorem 5.1]{Vet}.
\end{proof}

\begin{thm} \label{thm:com}
If $B$ is a two-sided noetherian ASF-regular algebra of dimension $d$ and of Gorenstein parameter $\ell$,
then $B$ is an AS-regular algebra over $B_0$ of dimension $d$ and of Gorenstein parameter $\ell$.
\end{thm}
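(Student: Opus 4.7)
The plan is to apply the local-duality isomorphism of \lemref{lem:ld} to $M = B_0$, viewed as a $B_0$-$B$-bimodule (so $C = B_0$ in the lemma), and read off the required formula for $\RuHom_B(B_0, B)$. Note that the global dimension assertion $\gldim B = \gldim B^{\op} = d < \infty$ is already shared by both the ASF-regular and the AS-regular-over-$B_0$ definitions, so only the cohomological conditions need attention.

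I would assemble the argument from three ingredients. First, since $B_0 = B/\m$ satisfies $B_0 \cdot \m = 0$, it is torsion as a right $B$-module, hence $\uG_\m$-acyclic, so $\RuG_\m(B_0) \cong B_0$ and therefore $D\RuG_\m(B_0) \cong DB_0$. Second, the ASF-regular hypothesis gives $\RuG_\m(B) \cong (DB)(\ell)[-d]$; applying the exact duality $D$ and using local finiteness of $B$ to identify $DDB \cong B$ yields $D\RuG_\m(B) \cong B(-\ell)[d]$. Third, substituting these into the isomorphism of \lemref{lem:ld} gives
\[ DB_0 \cong D\RuG_\m(B_0) \cong \RuHom_B(B_0, D\RuG_\m(B)) \cong \RuHom_B(B_0, B)(-\ell)[d] \]
in $\D{\GrMod B^{\op} \otimes B_0}$. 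Rearranging the shift and cohomological degree produces $\RuHom_B(B_0, B) \cong (DB_0)(\ell)[-d]$, and restricting along the right $B_0$-action places this isomorphism in $\D{\GrMod B_0}$, which is the desired condition on the right-module side.

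For the opposite-side condition, I would observe that $B^{\op}$ inherits the ASF-regular property with the same $d$ and $\ell$, and that \lemref{lem:ld} applies equally with $B^{\op}$ in place of $B$. Running the same three-step argument with left $B$-modules in place of right $B$-modules then yields $\RuHom_{B^{\op}}(B_0, B) \cong (DB_0)(\ell)[-d]$ in $\D{\GrMod B_0^{\op}}$, completing the verification.

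The main obstacle I expect is not conceptual but bookkeeping of bimodule structures: one must verify that every isomorphism truly holds at the level of the stated derived bimodule categories so that the restriction to $\D{\GrMod B_0}$ (respectively $\D{\GrMod B_0^{\op}}$) is legitimate. In particular, $\RuG_\m(B_0) \cong B_0$ must be checked as $B_0$-$B$-bimodules, and the ASF-regular isomorphism $\RuG_\m(B) \cong (DB)(\ell)[-d]$ must be read in $\D{\GrMod B^e}$ in order for the substitution into \lemref{lem:ld} to preserve the left $B_0$-action on the source $B_0$; both of these, however, follow from standard naturality of the constructions involved.
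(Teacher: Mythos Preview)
Your approach is the same as the paper's: apply local duality (\lemref{lem:ld}) with $M=B_0$, use $\RuG_\m(B_0)\cong B_0$, and substitute the ASF-regular isomorphism for $D\RuG_\m(B)$. The displayed chain you write is correct and yields $\RuHom_B(B_0,B)\cong (DB_0)(\ell)[-d]$ in $\D{\GrMod B_0}$, and the symmetric argument for $B^{\op}$ is fine.

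The one place where your write-up goes wrong is the final paragraph. You claim that $\RuG_\m(B)\cong (DB)(\ell)[-d]$ can be ``read in $\D{\GrMod B^e}$'' by naturality. It cannot: the ASF-regular definition only furnishes this isomorphism in $\D{\GrMod B}$ and in $\D{\GrMod B^{\op}}$ separately, and a pair of one-sided isomorphisms does not upgrade to a bimodule isomorphism for free. What the paper actually invokes is the existence of an automorphism $\mu\in\GrAut B$ with $D\RuG_\m(B)\cong B_\mu(-\ell)[d]$ in $\D{\GrMod B^e}$; this is a standard consequence of the structure theory of balanced dualizing complexes, not of naturality. With that twist in hand the paper runs your computation in $\D{\GrMod B^{\op}\otimes B_0}$ and obtains $\RuHom_B(B_0,B)\cong D(B_0)_{\mu^{-1}}(\ell)[-d]$ there, hence the desired isomorphism both in $\D{\GrMod B_0}$ and in $\D{\GrMod B_0^{\op}}$.

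Note also that your stated reason for needing the bimodule form is misdiagnosed: the right $B_0$-action on $\RuHom_B(B_0,-)$ comes from the \emph{first} argument and is automatically preserved under any substitution in the second, so your one-sided argument already gives the $\D{\GrMod B_0}$ statement with no extra bookkeeping. What the one-sided substitution genuinely loses is the \emph{left} $B$-structure (hence the left $B_0$-structure) coming from the second argument; that is precisely the $\D{\GrMod B_0^{\op}}$ half of the condition on $\RuHom_B(B_0,B)$, and it is where the paper's use of $\mu$ is doing real work.
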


\begin{proof}
Since there exists an algebra automorphism $\mu \in \GrAut B$ such that $D\RuG_{\m}(B) \cong B_{\mu}(-\ell)[d]$ in 
$\D{\GrMod B^e}$,
we have 
\begin{align*}
\RuHom_B(B_0,B) &\cong \RuHom_B(B_0,B_{\mu}(-\ell)[d])_{\mu^{-1}}(\ell)[-d]\\
&\cong \RuHom_B(B_0,D\RuG_{\m}(B))_{\mu^{-1}}(\ell)[-d]\\
&\cong D\RuG_{\m}(B_0)_{\mu^{-1}}(\ell)[-d]\\
&\cong D(B_0)_{\mu^{-1}}(\ell)[-d]
\end{align*}
in $\D{\GrMod B^{\op} \otimes B_0}$, so $\RuHom_B(B_0,B) \cong D(B_0)(\ell)[-d]$ in $\D{\GrMod B_0}$ and in $\D{\GrMod {B_0}^{\op}}$.
Hence the result follows.
\end{proof}

\begin{cor} \label{cor:com}
Let $B$ be a two-sided noetherian locally finite $\N$-graded algebra with $\gldim B_0 < \infty$.
Then $A$ is ASF-regular if and only if AS-regular over $B_0$.
\end{cor}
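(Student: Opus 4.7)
The implication ASF-regular $\Rightarrow$ AS-regular over $B_0$ is exactly \thmref{thm:com}, and inspection of its proof shows that the hypothesis $\gldim B_0<\infty$ plays no role there, so I would concentrate on the converse. Assume $B$ is AS-regular over $B_0$ of dimension $d$ and Gorenstein parameter $\ell$; the task is to produce a bimodule isomorphism $\RuG_{\m}(B)\cong DB(\ell)[-d]$ in $\D{\GrMod B}$ and in $\D{\GrMod B^{\op}}$, equivalently $D\RuG_{\m}(B)\cong B(-\ell)[d]$ in $\D{\GrMod B^e}$.

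My approach is to put the balanced dualizing complex of $B$ on the table and identify it with $B(-\ell)[d]$. First I would verify that the locally finite $\N$-graded extension of Van den Bergh's local duality---already invoked in the proof of \lemref{lem:ld}---applies to $B$: the hypothesis $\gldim B=d<\infty$ gives $\cd(\uG_{\m})\le d$, and the AS-regular formula $\RuHom_B(B_0,B)\cong DB_0(\ell)[-d]$ combined with $\gldim B_0<\infty$ should deliver the $\chi$ condition for $B$ by a dimension-shift induction (since $B_0$ is finite dimensional, $DB_0(\ell)[-d]$ has finite-dimensional cohomology, and every $M\in\grmod B$ can be filtered by its grading so that the $B_0$-pieces control the Ext calculation). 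Consequently $R_B:=D\RuG_{\m}(B)$ is a balanced dualizing complex for $B$, and local duality in the form $\RuHom_B(-,R_B)\cong D\RuG_{\m}(-)$ is available on both sides.

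Next I would test local duality against the torsion module $M=B_0$. Since $B_0$ is $\m$-torsion, $\RuG_{\m}(B_0)=B_0$, and therefore $\RuHom_B(B_0,R_B)\cong D(B_0)$. On the other hand, the AS-regular hypothesis rewrites as $\RuHom_B(B_0,B(-\ell)[d])\cong D(B_0)$. Thus $R_B$ and $B(-\ell)[d]$ agree under $\RuHom_B(B_0,-)$ in $\D{\GrMod B_0^e}$, with the symmetric conclusion on the $B^{\op}$ side.

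The main obstacle is the last step: promoting this agreement at $B_0$ to an actual isomorphism $R_B\cong B(-\ell)[d]$ in $\D{\GrMod B^e}$. This is where $\gldim B_0<\infty$ should be decisive; the plan is to run a derived Nakayama / reconstruction argument, exploiting that $\gldim B<\infty$ puts both $R_B$ and $B(-\ell)[d]$ in $\Db{\grmod B^e}$ with bilaterally finitely generated cohomology, and that under $\gldim B_0<\infty$ the functor $-\Lotimes_B B_0$ (dually, $\RuHom_B(B_0,-)$) is conservative on this subcategory, so that $B^e$-level isomorphism can be recovered from $B_0^e$-level agreement together with matching top and bottom degrees. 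Once $R_B\cong B(-\ell)[d]$ in $\D{\GrMod B^e}$ is secured, applying $D$ yields the desired $\RuG_{\m}(B)\cong DB(\ell)[-d]$, and the same isomorphism holds on the $B^{\op}$ side; this is precisely ASF-regularity.
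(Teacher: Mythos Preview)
The paper's proof is a one-line citation: \thmref{thm:com} handles the forward direction, and the converse is \cite[Theorem 3.12]{MM}. You reproduce the first half but then attempt to reprove the converse from scratch via the balanced dualizing complex $R_B:=D\RuG_{\m}(B)$.

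Your sketch has a genuine gap at the step you yourself flag as the ``main obstacle.'' Having shown that $R_B$ and $B(-\ell)[d]$ have isomorphic images under $\RuHom_B(B_0,-)$, you invoke conservativity of this functor (equivalently of $-\Lotimes_B B_0$) to conclude $R_B\cong B(-\ell)[d]$. But conservativity says only that a given \emph{morphism} is an isomorphism when its image is; it does not let you pass from $F(X)\cong F(Y)$ to $X\cong Y$ without first producing a map $X\to Y$. No comparison map between $R_B$ and $B(-\ell)[d]$ has been constructed---in $\D{\GrMod B}$, let alone in $\D{\GrMod B^e}$, which incidentally is stronger than what the ASF-regular definition actually demands---and the phrase ``matching top and bottom degrees'' does not supply one. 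Building that map, or equivalently computing each $\uH^i_{\m}(B)$ directly from a finite projective resolution of $B_0$, is precisely the substance of \cite[Theorem 3.12]{MM}, and is where the hypothesis $\gldim B_0<\infty$ is genuinely used. Unless you are prepared to carry out that computation in full, the clean route is to cite \cite[Theorem 3.12]{MM} as the paper does.
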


\begin{proof}
This is a combination of \thmref{thm:com} and \cite[Theorem 3.12]{MM}.
\end{proof}

\section{Main Result}
In this section, we prove the main result (\thmref{thm:main}) and give an example of its use.
First we introduce a condition which we require for the main result.

\begin{dfn}
Let $A$ be an AS-Gorenstein algebra with the generalized Nakayama automorphism $\nu \in \GrAut A$.
Then $X \in \GrMod A$ is called $\nu$-stable if $X_\nu \cong X$ as graded right $A$-modules.
\end{dfn}
Since $A_\nu \cong A$ in $\GrMod A$, $A$ is always $\nu$-stable.
Clearly, if $A$ is symmetric, that is, the generalized Nakayama automorphism of $A$ is the identity,
then every $M \in \GrMod A$ is $\nu$-stable.

\begin{ex} \label{ex:st}
Let $S$ be an AS-regular algebra and $G$ a finite subgroup of $\GrAut S$ such that
$\fchar k$ does not divide $|G|$.
If $S^G$ is AS-Gorenstein, then $S \in \GrMod S^G$ is $\nu$-stable by \cite[Lemma 5.8]{U5}.
\end{ex}

\begin{lem} \label{lem:st}
Let $A$ be an AS-Gorenstein algebra and let $X \in \CM A$.
If $X$ is $\nu$-stable, then $X \cong X_{\nu^{-1}}$ in $\GrMod A$ and $X^\da$ is $\nu$-stable in $\GrMod A^{\op}$. 
\end{lem}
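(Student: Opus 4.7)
The plan is to handle both claims through direct manipulation of the twist-by-automorphism functors, leaning on the composition identity $(M_\sigma)_\tau = M_{\sigma\tau}$ --- which makes $(-)_{\nu^{-1}}$ a quasi-inverse to $(-)_\nu$ on $\GrMod A$ --- and on the duality $(-)^\da : \CM A \to \CM {A^{\op}}$ recalled in \eqref{odual}. The first statement $X \cong X_{\nu^{-1}}$ then falls out immediately: applying the autoequivalence $(-)_{\nu^{-1}}$ to the $\nu$-stability isomorphism $X_\nu \cong X$ yields $X = (X_\nu)_{\nu^{-1}} \cong X_{\nu^{-1}}$.

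For the second statement, my plan is to prove the compatibility of duality with twisting, namely
\[ (M_\sigma)^\da \;\cong\; (M^\da)_\sigma \quad \text{in } \GrMod A^{\op}, \]
valid for every $M \in \CM A$ and every $\sigma \in \GrAut A$ (viewing $\sigma$ simultaneously as an element of $\GrAut A^{\op}$ under the canonical identification of graded algebra automorphisms). Granting this, applying $(-)^\da$ to $X_\nu \cong X$ gives $(X^\da)_\nu \cong (X_\nu)^\da \cong X^\da$, which is precisely the $\nu$-stability of $X^\da$ in $\GrMod A^{\op}$.

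To establish the compatibility formula I would exhibit the natural candidate $\Phi : \uHom_A(M_\sigma, A) \to \uHom_A(M,A)$ defined by $\Phi(\phi) = \sigma \circ \phi$. The change of variable $b = \sigma(a)$ converts the defining identity $\phi(m\sigma(a)) = \phi(m)\, a$ into $(\sigma\circ\phi)(mb) = (\sigma\circ\phi)(m)\, b$, so $\Phi$ lands in $M^\da$, and it is evidently a graded $k$-linear bijection with inverse $\sigma^{-1}\circ(-)$. Under the right $A^{\op}$-structure $(f\cdot a^{\op})(m) = a f(m)$, a one-line check gives $\Phi(\phi \cdot a^{\op}) = \Phi(\phi) \cdot \sigma(a)^{\op}$, which matches exactly the twisted action $f *_\sigma a^{\op} = f \cdot \sigma(a)^{\op}$ on $(M^\da)_\sigma$. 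The main obstacle I anticipate is precisely this bookkeeping: one must confirm that the twist on the $A^{\op}$-side comes out as $\sigma$ and not $\sigma^{-1}$, which hinges on the conventions identifying $\GrAut A$ with $\GrAut A^{\op}$ and on the prescription of the right $A^{\op}$-action on $M^\da$. The Cohen-Macaulay hypothesis plays no role in this argument beyond making the duality $(-)^\da$ available.
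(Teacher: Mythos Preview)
Your argument is correct and follows essentially the same route as the paper's proof: both parts are handled by elementary twist manipulations, with the paper writing the second part as the chain ${_\nu}(X^\da)\cong\uHom_A(X,{_\nu}A)\cong\uHom_A(X,A_{\nu^{-1}})\cong\uHom_A(X_\nu,A)\cong X^\da$, which your explicit map $\Phi(\phi)=\sigma\circ\phi$ compresses into the single compatibility $(M_\sigma)^\da\cong{_\sigma}(M^\da)$. The only difference is organizational, and your caution about the $\sigma$ versus $\sigma^{-1}$ bookkeeping is well placed but resolves exactly as you indicate.
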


\begin{proof} It is easy to check that
$X \cong X_{\nu \nu^{-1}} \cong X_{\nu^{-1}}$ in $\GrMod A$, and
\begin{align*}
{_\nu}(X^\da)  
\cong \uHom_A(X,{{_\nu}}A)
\cong \uHom_A(X,A_{\nu^{-1}})
\cong \uHom_A(X_{\nu},A)
\cong \uHom_A(X,A) 
\cong X^\da
\end{align*}
in $\GrMod A^{\op}$.
\end{proof}

The notion of an $n$-cluster tilting module plays an important role in representation theory of orders,
especially, higher analogue of Auslander-Reiten theory. It can be regarded as a natural generalization of the classical
notion of Cohen-Macaulay representation-finiteness.

\begin{dfn}
Let $A$ be an AS-Gorenstein algebra. For a positive integer $n \in \N^{+}$,
a graded maximal Cohen-Macaulay module $X \in \CM A$ is called $n$-cluster tilting if
\begin{align*}
\add\{X(i)|i\in \Z\}
&= \{M \in \CM A | \uExt^i_A(X,M)=0 \;\text{for}\; 0 < i < n \}\\
&= \{M \in \CM A | \uExt^i_A(M,X)=0 \;\text{for}\; 0 < i < n \},
\end{align*}
where $\add\{X(i)|i\in \Z\}$ is the full subcategory of $\grmod A$
consisting of direct summands of finite direct sums of shifts of $X$.
\end{dfn}

Let $A$ be a noetherian locally finite $\N$-graded algebra. Then it is known that $\grmod A$ has the Krull-Schmidt property,
i.e., each finitely generated graded module is a direct sum of a uniquely determined set of indecomposable graded modules.
Recall that $M \in \grmod A$ is called basic if
each indecomposable direct summand occurs exactly once (up to isomorphism and degree shift of grading) in a direct sum decomposition.
The following proposition says that if $A$ has a $(d-1)$-cluster tilting module and $\gldim(\tails A)<\infty$,
then it has a $\nu$-stable $(d-1)$-cluster tilting module.

\begin{prop} \label{prop:bs}
Let $A$ be an AS-Gorenstein algebra of dimension $d\geq 2$, Gorenstein parameter $\ell$, and $\gldim(\tails A)<\infty$.
If $X \in \CM A$ is a basic $(d-1)$-cluster tilting module, then $X$ is $\nu$-stable.
\end{prop}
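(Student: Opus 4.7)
The plan is to run the argument through the stable category $\uCM{A}$ using its Auslander-Reiten-Serre structure. First I would set up the Serre functor: because $A$ is AS-Gorenstein, $\CM{A}$ is a Frobenius category and its stable category $\uCM{A}$ is triangulated. The hypothesis $\gldim(\tails A) < \infty$ is the graded noncommutative isolated-singularity condition, which guarantees that $\uCM{A}$ is Hom-finite and Krull-Schmidt; by Auslander-Reiten-Serre duality $\uCM{A}$ admits a Serre functor $\mathbb{S}$, and using the balanced dualizing complex $\omega_A = A_\nu(-\ell)$ an explicit computation yields $\mathbb{S} \cong (-)_\nu(-\ell)\circ [d-1]$, where $[1]$ denotes the triangulated shift of $\uCM{A}$. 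Setting $\tau_{d-1} := \mathbb{S}\circ[-(d-1)] \cong (-)_\nu(-\ell)$ gives the $(d-1)$-Auslander-Reiten translate on $\uCM{A}$.

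Next I would apply the cluster-tilting hypothesis. The defining identity of $(d-1)$-cluster tilting combined with Serre duality shows that the subcategory $\cT := \add\{X(i)\mid i\in\Z\}$ of $\uCM{A}$ is closed under $\tau_{d-1}$. Applied to $X$, this gives $X_\nu(-\ell) = \tau_{d-1}(X) \in \cT$ inside $\uCM{A}$. To lift this membership from $\uCM{A}$ back to $\CM{A}$, note that $A$ itself belongs to $\cT$ (because $X \in \CM{A}$ forces $\uExt^i_A(X,A) = 0$ for all $i>0$), and basicness of $X$ then implies $A$ is one of the indecomposable summands of $X$; after relabeling, $X_1 \cong A$. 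Since isomorphism in $\uCM{A}$ between indecomposable non-projective MCM modules coincides with isomorphism in $\CM{A}$, and $(X_1)_\nu = A_\nu \cong A$ handles the projective summand on the nose, Krull-Schmidt in $\CM{A}$ produces a decomposition $X_\nu \cong \bigoplus_{j=1}^r X_{\sigma(j)}(n_j)$ for some permutation $\sigma$ of $\{1,\dots,r\}$ and some integers $n_j$, with $\sigma(1)=1$ and $n_1=0$ already fixed.

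The remaining and decisive step is to show $\sigma = \id$ and $n_j = 0$ for every $j$, which is exactly the $\nu$-stability $X_\nu \cong X$. One route is to compare Hilbert series: the identity $H_{X_\nu}(t) = H_X(t)$ together with the Krull-Schmidt expansion $H_{X_\nu}(t) = \sum_j t^{-n_j} H_{X_{\sigma(j)}}(t)$ yields a Laurent-polynomial relation among the $H_{X_j}(t)$; basicness of $X$ (distinct indecomposable summands sit in distinct grading shift classes) together with the non-negativity of the Hilbert series of MCM modules is then intended to force each $n_j = 0$ and $\sigma = \id$. A more structural route is to adapt Iyama's theorem that basic cluster-tilting modules over Gorenstein orders are stable under the Nakayama twist, carried over to the graded AS-Gorenstein setting where the Gorenstein parameter $\ell$ is precisely the shift recorded by $\tau_{d-1}$.

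The main obstacle is this last rigidity step. Every earlier step is formal, being a consequence of triangulated-Serre machinery combined with higher Auslander-Reiten theory; but promoting the abstract membership $X_\nu \in \cT$ to the sharp equality $X_\nu \cong X$ in $\GrMod A$ requires genuinely graded input to rule out nontrivial permutations and nontrivial grading shifts among the indecomposable summands, and this is where the basicness hypothesis and the specific value of the Gorenstein parameter must interact.
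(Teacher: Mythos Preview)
Your setup through the Serre functor on $\uCM{A}$ and the deduction $X_\nu \in \add\{X(i)\mid i\in\Z\}$ match the paper's proof exactly. The divergence is entirely in the final step, which you correctly flag as the crux but do not resolve. Your Hilbert-series route does not close the gap as written: from $(X_m)_\nu \cong X_{\sigma(m)}(n_m)$ and $H_{(X_m)_\nu}(t) = H_{X_m}(t)$ one obtains only $\sum_{j \in C} n_j = 0$ for each cycle $C$ of $\sigma$, not $n_j = 0$ individually; basicness does not force distinct indecomposable summands to have $\Q(t)$-independent Hilbert series, so the non-negativity argument you sketch does not go through. The appeal to an Iyama-type result remains a pointer, not a proof.

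The paper's argument at this point is much shorter and does not attempt to control $\sigma$ and the $n_j$ one by one. Instead it runs the Serre-duality step a second time with $\nu^{-1}$ in place of $\nu$, obtaining $X_{\nu^{-1}} \in \add\{X(i)\mid i\in\Z\}$ as well, and then argues symmetrically: since $X_\nu$ is basic and lies in $\add\{X(i)\}$ it is a direct summand of $X$, and since $X_{\nu^{-1}}$ is a direct summand of $X$ one gets $X$ a direct summand of $X_\nu$; hence $X_\nu \cong X$. Your summand-by-summand bookkeeping overlooks this two-sided shortcut. That said, your instinct that the grading shifts $n_j$ are the delicate point is well founded: the paper's passage from ``$X_\nu$ basic and in $\add\{X(i)\mid i\in\Z\}$'' to ``$X_\nu$ is a direct summand of $X$'' is itself terse, and a careful write-up would need exactly the kind of graded control you were reaching for.
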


\begin{proof}
By \cite[Corollary 4.5]{U5}, we see that the stable category $\uCM A$ has the Serre functor 
$-\otimes_A A_{\nu}(-\ell)[d-1]$. Since 
$\uExt^{i}_A(X,X)\cong\bigoplus_{s \in \Z}\Hom_{\uCM A}(X,X(s)[i])=0$
for any $0<i<d-1$, we have $\uExt^{i}_A(X,X_\nu)=0$ for any $0<i<d-1$ by using the Serre functor of $\uCM A$,
so $X_\nu \in \add\{X(i)|i\in \Z\}$.
Since $X$ is basic, $X_\nu$ is also basic, so it follows that $X_\nu$ is a direct summand of $X$.
Similarly, we can show that $X_{\nu^{-1}}$ is a direct summand of $X$, so $X$ is a direct summand of $X_\nu$.
Hence the result follows.
\end{proof}

Next we prepare some lemmas which we need to prove the main result.

\begin{lem} \label{lem:Eiso}
Let $A$ be a graded algebra and $X$ a graded right $A$-module containing $A$ as a direct summand.
Then $X$ is a finitely generated graded left projective module over $\uEnd_A(X)$.
Moreover, for any $M \in \GrMod A$, there is a natural isomorphism
$$M \cong \uHom_A(X,M)\otimes_{\uEnd_A(X)} X$$
in $\GrMod A$.
\end{lem}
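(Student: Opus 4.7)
The plan is to use a Morita-style argument based on the idempotent coming from the direct summand decomposition. Fix graded right $A$-linear maps $i \colon A \to X$ and $p \colon X \to A$ with $p \circ i = \id_A$, and set $e := i \circ p$ in $E := \uEnd_A(X)$; then $e$ is a homogeneous idempotent of degree $0$.

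For the projectivity claim, I would show that $X \cong Ee$ as graded left $E$-modules via $fe \mapsto f(i(1_A))$, with inverse $x \mapsto g_x \circ p$, where $g_x \colon A \to X$ is the graded right $A$-homomorphism $a \mapsto xa$; the relation $p \circ i = \id_A$ makes both composites equal to the identity. Since $E = Ee \oplus E(1-e)$ as graded left $E$-modules, $X$ is a direct summand of $E$ itself, hence finitely generated (by the image of $e$) and graded projective on the left.

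For the natural isomorphism, I would exhibit mutually inverse graded right $A$-linear maps
\[
\epsilon_M \colon \uHom_A(X, M) \otimes_E X \to M, \quad \phi \otimes x \mapsto \phi(x),
\]
and
\[
\eta_M \colon M \to \uHom_A(X, M) \otimes_E X, \quad m \mapsto (\phi_m \circ p) \otimes i(1_A),
\]
where $\phi_m \colon A \to M$ is the graded hom $a \mapsto ma$. Both are natural in $M \in \GrMod A$. Right $A$-linearity of $\eta_M$ follows from the observation that, for $a \in A$, the element $\tilde{\mu}_a := i \circ \mu_a \circ p \in E$ (with $\mu_a \colon A \to A$ left multiplication by $a$) satisfies $\tilde{\mu}_a(i(1_A)) = i(a)$ and $(\phi_m \circ p) \circ \tilde{\mu}_a = \phi_{ma} \circ p$, so the $E$-balancing of the tensor product absorbs the action of $a$.

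The identity $\epsilon_M \circ \eta_M = \id_M$ is immediate from $p \circ i = \id_A$. For $\eta_M \circ \epsilon_M = \id$, the key trick is that for each homogeneous $x \in X$ the map $h_x \colon X \to X$, $y \mapsto x \cdot p(y)$, lies in $E$ and satisfies $h_x(i(1_A)) = x$ as well as $\phi \circ h_x = \phi_{\phi(x)} \circ p$, so
\[
\eta_M(\epsilon_M(\phi \otimes x)) = (\phi_{\phi(x)} \circ p) \otimes i(1_A) = (\phi \circ h_x) \otimes i(1_A) = \phi \otimes h_x(i(1_A)) = \phi \otimes x.
\]
The main obstacle is essentially only bookkeeping: tracking the gradings and the interplay between composition inside $E$ and the right $A$-action on $X$. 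The entire argument is driven by the single idempotent relation $p \circ i = \id_A$, which simultaneously produces the projective summand of $E$ and the distinguished element $i(1_A) \in X$ serving as a universal witness on the tensor-product side.
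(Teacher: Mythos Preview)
Your argument is correct. For the projectivity claim you and the paper do essentially the same thing: both identify $X$ with the summand $\uHom_A(A,X)$ of $B$ (equivalently, with $Be$ for your idempotent $e$).

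For the natural isomorphism, however, you take a different route from the paper. The paper argues abstractly: it first records that $\uEnd_{B^{\op}}(X)\cong A$ as graded algebras (a double-centralizer statement, which follows from $X\cong Be$ and $eBe\cong A$), and then deduces in one line that
\[
\uHom_A(X,M)\otimes_B X \;\cong\; \uHom_A\bigl(\uEnd_{B^{\op}}(X),M\bigr)\;\cong\;\uHom_A(A,M)\;\cong\;M.
\]
Your approach avoids this entirely and instead writes down the explicit evaluation map $\epsilon_M$ and its inverse $\eta_M$, reducing everything to manipulations with the single idempotent relation $p\circ i=\id_A$. What your approach buys is self-containment: every step is a direct computation, and the reader need not unpack why $\uHom_A(X,M)\otimes_B X \cong \uHom_A(\uEnd_{B^{\op}}(X),M)$ holds for finitely generated projective $X$. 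What the paper's approach buys is brevity and a conceptual explanation (the isomorphism is a shadow of the Morita context determined by $e$), at the cost of leaving that middle isomorphism to the reader.
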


\begin{proof}
Put $B :=\uEnd_A(X)$.
Since $X \cong A \oplus Y$ for some $Y \in \GrMod A$, we have 
$$B= \uHom_A(X,X) \cong \uHom_A(A,X) \oplus \uHom_A(Y,X) \cong X \oplus \uHom_A(Y,X)$$
in $\GrMod B^{\op}$, so $X$ is finitely generated graded left projective over $B$.
Moreover one can verify that $\uEnd_{B^{\op}}(X) \cong A$ as graded algebras.
Thus, for any $M \in \GrMod A$, we have
\[ \uHom_A(X,M)\otimes_B X \cong \uHom_{A}(\uEnd_{B^{\op}}(X),M) \cong \uHom_{A}(A,M) \cong M \]
as graded right $A$-modules.
\end{proof}

\begin{lem} \label{lem:fg}
Let $A$ be an AS-Gorenstein algebra, and $X \in \CM A$ such that $X$ contains $A$ as a direct summand.
If $\uEnd_A(X)$ is right noetherian and $M \in \grmod A$, then $\uHom_A(X,M)$ is a finitely generated graded right $\uEnd_A(X)$-module.
\end{lem}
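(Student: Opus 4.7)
The plan is to combine the maximal Cohen-Macaulay approximation theorem with Lemma~\ref{lem:Eiso} applied to $X^\da$ via the duality $(-)^\da : \CM A \to \CM{A^{\op}}$.

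To reduce to the case $M \in \CM A$, apply \thmref{thm:CMapp} to produce a short exact sequence $0 \to L \to Z \to M \to 0$ in $\grmod A$ with $Z \in \CM A$ and $\uExt^i_A(X,L)=0$ for all $i \geq 1$. Applying $\uHom_A(X,-)$ yields a short exact sequence
\[ 0 \to \uHom_A(X,L) \to \uHom_A(X,Z) \to \uHom_A(X,M) \to 0 \]
of graded right $B$-modules, so by the right noetherianness of $B$ it suffices to prove that $\uHom_A(X,Z)$ is finitely generated over $B$ for any $Z \in \CM A$.

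For such $Z$, the duality \eqref{odual} gives a natural isomorphism $\uHom_A(X,Z) \cong \uHom_{A^{\op}}(Z^\da, X^\da)$ of graded right $B$-modules, where the right $B$-action on the target comes from pre-composition by $B = \uEnd_A(X)$ on $X^\da$ (equivalently, post-composition by $C = \uEnd_{A^{\op}}(X^\da) \cong B^{\op}$, using \eqref{opiso}). Since $X$ contains $A$ as a direct summand and $\CM A$ is closed under summands, $X^\da$ contains $A^\da = A$ as a direct summand in $\grmod{A^{\op}}$, so Lemma~\ref{lem:Eiso} applied to $X^\da$ over $A^{\op}$ shows that $X^\da$ is a finitely generated graded left projective $C$-module; via $C \cong B^{\op}$ this means $X^\da$ is finitely generated as a graded right $B$-module. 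Choose a surjection $\bigoplus_i A(d_i) \twoheadrightarrow Z^\da$ in $\grmod{A^{\op}}$ (possible because $Z^\da \in \grmod{A^{\op}}$) and apply $\uHom_{A^{\op}}(-,X^\da)$ to obtain a monomorphism
\[ \uHom_{A^{\op}}(Z^\da, X^\da) \hookrightarrow \bigoplus_i X^\da(-d_i) \]
of graded right $B$-modules. The target is finitely generated over $B$, and hence so is the source by the right noetherianness of $B$; combined with the reduction above, this proves the lemma.

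The only real technical nuisance is keeping track of the right $B$-module structures under the various identifications: one must verify that pre-composition by $\uEnd_A(X)$ on $\uHom_A(X,Z)$ corresponds under $(-)^\da$ to the right $B$-action on $\uHom_{A^{\op}}(Z^\da, X^\da)$ induced by the action on $X^\da$, and that the left $C$-projectivity of $X^\da$ furnished by Lemma~\ref{lem:Eiso} translates through $C \cong B^{\op}$ into right $B$-projectivity. Once these compatibilities are spelled out, the argument reduces to the straightforward noetherian submodule step.
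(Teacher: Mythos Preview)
Your proof is correct, and the reduction via \thmref{thm:CMapp} is exactly what the paper does. The difference lies only in how the case $Z\in\CM A$ is handled. After reducing to $Z\in\CM A$, the paper stays on the $A$-side: since $A$ is AS-Gorenstein and $Z$ is maximal Cohen-Macaulay, there is a graded monomorphism $Z\hookrightarrow F$ with $F$ finitely generated free (this is, of course, the dual of a free presentation of $Z^\da$, the same construction underlying your argument). Because $A$ is a direct summand of $X$, one can absorb $F$ into a finite direct sum $\hat X$ of shifts of $X$, giving $\uHom_A(X,Z)\hookrightarrow\uHom_A(X,\hat X)$ in $\GrMod B$; the target is a finite direct sum of shifts of $B$, hence finitely generated, and the noetherian conclusion follows immediately.

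Your route via the duality $\uHom_A(X,Z)\cong\uHom_{A^{\op}}(Z^\da,X^\da)$ and \lemref{lem:Eiso} applied to $X^\da$ reaches the same destination, but at the cost of the compatibility checks you flag at the end (matching the right $B$-action through $(-)^\da$ and through $C\cong B^{\op}$). The paper's version sidesteps all of that bookkeeping by never leaving $\grmod A$: once $F$ is replaced by $\hat X$, the target $\uHom_A(X,\hat X)$ is visibly free over $B$ with its natural right action, and no translation between $B$ and $B^{\op}$ is needed. Both arguments are short, but the paper's is the cleaner one to write down.
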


\begin{proof}
Put $B :=\uEnd_A(X)$. By \thmref{thm:CMapp}, we have an exact sequence
$$ 0 \to \uHom_A(X,L) \to \uHom_A(X,Z) \to \uHom_A(X,M) \to 0 $$
in $\GrMod B$ where $Z \in \CM A$ and $\injdim_A L<\infty$,
so it is enough to show that $\uHom_A(X,Z)$ is finitely generated.
Since $A$ is AS-Gorenstein and $Z \in \CM A$, we have a graded monomorphism $Z \to F$ in $\GrMod A$
where $F$ is a finitely generated free $A$-module.
Now $A$ is a direct summand of $X$, so there exist graded monomorphisms $Z \to \hat{X}$ in $\GrMod A$ and 
$\uHom_A(X,Z) \to \uHom_A(X,\hat{X})$ in $\GrMod B$ where $\hat{X}$ is a finite direct sum of shifts of $X$.
Since $\uHom_A(X,\hat{X})$ is finitely generated and $B$ is right noetherian, $\uHom_A(X,Z)$ is also finitely generated.
\end{proof}

\begin{lem} \label{lem:lc}
Let $B$ be a two-sided noetherian locally finite $\N$-graded algebra. For any $M \in \GrMod B^e$,
\[ \RuG_{\m}(\RuG_{\m^{\op}}(M)) \cong \RuG_{\m^{\op}}(\RuG_{\m}(M)) \]
in $\D{\GrMod B^e}$.
\end{lem}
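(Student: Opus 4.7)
The plan is to deduce the isomorphism from Van den Bergh's local duality theory in \cite{Vet}, extended to the locally finite $\N$-graded setting as indicated in the proof of \lemref{lem:ld}. At the level of ordinary functors on $\GrMod B^e$, the two torsion functors $\uG_\m$ and $\uG_{\m^{\op}}$ commute in a manifest way: unfolding
\[
\uG_\m\uG_{\m^{\op}}(M) = \varinjlim_{m,n} \uHom_B\bigl(B/B_{\geq m},\; \uHom_{B^{\op}}(B/B_{\geq n}, M)\bigr),
\]
and using the standard $(B,B)$-bimodule Hom adjunction, one sees that interchanging the order of the iterated colimits yields a canonically isomorphic bimodule, naturally in $M$.

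To lift this underived commutation to $\D{\GrMod B^e}$, I would take an injective resolution $M \to I^{\bullet}$ in $\GrMod B^e$. The key auxiliary fact is that any injective graded bimodule $I$ is injective on each side as a one-sided graded $B$-module. Indeed, since $k$ is a field, the ``free bimodule'' functor $-\otimes_k B : \GrMod B \to \GrMod B^e$ is exact, and its right adjoint (restriction of scalars to $B$) therefore preserves injectivity; the argument on the left is symmetric.

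Granted this one-sided injectivity, I would then invoke the locally finite $\N$-graded analogue of \cite[Theorem 5.1]{Vet} (the same extension already used in \lemref{lem:ld}) to conclude three things simultaneously: that $\uG_\m(I^{\bullet})$ computes $\RuG_\m(M)$, that $\uG_{\m^{\op}}(I^{\bullet})$ computes $\RuG_{\m^{\op}}(M)$, and that each $\uG_{\m^{\op}}(I^n)$ remains $\uG_\m$-acyclic (and symmetrically). Combined with the underived commutation applied termwise to $I^{\bullet}$, a routine Cartan--Eilenberg / double-complex argument identifies both iterated derived functors with the total complex of the bicomplex $\uG_\m \uG_{\m^{\op}}(I^{\bullet})$, yielding the asserted isomorphism in $\D{\GrMod B^e}$.

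The main obstacle is precisely the acyclicity statement, that $\uG_{\m^{\op}}$ sends bimodule injectives to $\uG_\m$-acyclics and vice versa; this is the non-formal ingredient and is exactly where one needs to check that Van den Bergh's structural results in \cite[Sections 3--6]{Vet} continue to hold in the locally finite $\N$-graded setting, as in \lemref{lem:ld}.
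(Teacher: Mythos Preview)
Your approach coincides with the paper's: the paper simply invokes the locally finite $\N$-graded version of \cite[Lemma 4.5]{Vet}, which is precisely this commutation statement in the connected graded case, and your sketch is effectively a reconstruction of its proof. One small correction: the acyclicity step you flag---that $\uG_{\m^{\op}}(I^n)$ remains $\uG_\m$-acyclic for bimodule injectives $I^n$---does not follow from \cite[Theorem 5.1]{Vet} (that is the local duality isomorphism, the input to \lemref{lem:ld}); rather it is exactly what the proof of \cite[Lemma 4.5]{Vet} supplies, using the structure theory of graded injectives developed in \cite[Section 4]{Vet}, so your final paragraph points to the right place while the earlier citation does not.
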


\begin{proof}
One can show the locally finite $\N$-graded version of \cite[Lemma 4.5]{Vet},
so the assertion holds.
\end{proof}

\begin{lem} \label{lem:ld2}
Let $B$ be a two-sided noetherian ASF-regular algebra with an idempotent $e$.
If $M \in \grmod B$ is finite dimensional over $k$ such that $Me=0$, then
$\uExt^i_B(M,eB) = 0$
for any $i$.
\end{lem}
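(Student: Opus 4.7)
My plan is to exploit local duality. Since $M$ is finite-dimensional it is torsion, so $\RuG_\m(M)=M$; applying \lemref{lem:ld} with $C=k$ therefore yields, in $\D{\GrMod B^{\op}}$,
\[
DM \;\cong\; \RuHom_B\bigl(M, D\RuG_\m(B)\bigr).
\]
Combining this with the identification $D\RuG_\m(B)\cong B_\mu(-\ell)[d]$ in $\D{\GrMod B^e}$ from the proof of \thmref{thm:com}, where $\mu\in\GrAut B$ is the generalized Nakayama automorphism of $B$, I obtain $\uExt^i_B(M,B_\mu)=0$ for $i\neq d$ and $\uExt^d_B(M,B_\mu)\cong DM(\ell)$ as left $B$-modules.

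Next I use that $B_\mu$ decomposes as $eB_\mu\oplus(1-e)B_\mu$ as right $B$-modules, since left multiplication by the idempotent $e$ commutes with the twisted right action. This gives $\uExt^i_B(M,eB_\mu)=e\cdot\uExt^i_B(M,B_\mu)$, the summand cut out by the left action of $e$. It vanishes for $i\neq d$ by the previous paragraph; for $i=d$, the hypothesis $Me=0$ implies $(e\cdot\varphi)(m)=\varphi(me)=0$ for every $\varphi\in DM$ and $m\in M$, so $e\cdot DM=0$ and $\uExt^d_B(M,eB_\mu)=0$. Hence $\uExt^i_B(M,eB_\mu)=0$ for all $i$.

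To finish, one identifies $eB$ with $eB_\mu$ as right $B$-modules, the natural candidate being $eb\mapsto e\mu(b)$, and transports the vanishing along this identification. The main obstacle I expect is exactly this last step: the bimodule twist by $\mu$ in the local duality formula must be reconciled with the particular idempotent $e$, which requires either that $\mu$ fixes $e$ (a feature of $\mu$ in the ASF-regular setting) or a more refined local-duality argument that produces $eB$ directly rather than its $\mu$-twist $eB_\mu$. Everything else is routine manipulation of local duality, the direct-sum decomposition $B_\mu=eB_\mu\oplus(1-e)B_\mu$, and the defining property $Me=0$.
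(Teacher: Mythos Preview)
Your argument is the paper's: local duality via \lemref{lem:ld}, the observation that $\RuG_\m(M)\cong M$ for finite-dimensional $M$, and then $e\cdot DM=D(Me)=0$. The only difference is that the paper never names the Nakayama automorphism. Rather than passing through $B_\mu$, it works directly with $D\RuG_\m(B)$ and writes
\[
\RuHom_B(M,eB)\;\cong\;e\,\RuHom_B(M,D\RuG_\m(B))(\ell)[-d]\;\cong\;e\,D\RuG_\m(M)(\ell)[-d]\;=\;D(Me)(\ell)[-d]\;=\;0,
\]
the first isomorphism taken straight from the ASF-regular identification $B\cong D\RuG_\m(B)(\ell)[-d]$. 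That first step is exactly the ``more refined local-duality argument that produces $eB$ directly'' you were hoping for; the paper asserts it without isolating any compatibility condition between $\mu$ and $e$, so the obstacle you flag is absorbed into that one line rather than resolved by a separate argument. Your proposed map $eb\mapsto e\mu(b)$ is not well defined in general (it depends on $b$, not just on $eb$), so if you want to make the twist explicit you would indeed need $\mu(e)=e$, or else adopt the paper's formulation and work with $D\RuG_\m(B)$ throughout.
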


\begin{proof}
Since  $B$ is ASF-regular, we have
\begin{align*}
\RuHom_B(M,eB) \cong e\RuHom_B(M,D\RuG_{\m}(B)(\ell)[-d]) \cong e\RuHom_B(M,D\RuG_{\m}(B))(\ell)[-d].
\end{align*}
Moreover, by \lemref{lem:ld},
\begin{align*}
e\RuHom_B(M,D\RuG_{\m}(B))(\ell)[-d] \cong eD\RuG_{\m}(M)(\ell)[-d].
\end{align*}
Since $M \in \grmod B$ is finite dimensional over $k$, $\RuG_{\m}(M) \cong M$, so we have
$\uExt^i_B(M,eB) \cong eD\uH^{d-i}_{\m}(M)(\ell)=0$
for all $i \neq d$, and 
$\uExt^d_B(M,eB) \cong eD(M)(\ell) \cong D(Me)(\ell)=0$ because $Me=0$.
\end{proof}

Now we are ready to prove the main result of this paper.

\begin{thm} \label{thm:main}
Let $A$ be a two-sided noetherian connected graded algebra satisfying $\chi$ on both sides, and let $X \in \grmod A$. 
We consider the following conditions {\bf (A)} and {\bf (B)}:

{\bf (A):} $A$ and $X$ satisfy
\begin{description}
\item[(A1)] $A$ is an AS-Gorenstein algebra of dimension $d\geq 2$,
\item[(A2)] $X$ is a $(d-1)$-cluster tilting module,
\item[(A3)] $\uEnd_A(X)_{<0}=0$, and
\item[(A4)] $\uExt^1_A(X,M)$ and $\uExt^1_A(M,X)$ are finite dimensional over $k$ for any $M \in \CM A$.
\end{description}

{\bf (B):} $B := \uEnd_{A}(X)$ satisfies
\begin{description}
\item[(B1)] $B$ is a two-sided noetherian ASF-regular algebra of dimension $d \geq 2$,
\item[(B2)] $-\otimes_B X$ induces an equivalence functor $\tails B \xrightarrow{\sim} \tails A$, and 
\item[(B3)] $X^\da \otimes_B -$ induces an equivalence functor $\tails B^{\op} \xrightarrow{\sim} \tails A^{\op}$.
\end{description}
Then 
\begin{enumerate}
\item If {\bf (A)} is fulfilled and $X$ is either $\nu$-stable or basic, then {\bf (B)} holds.
\item If {\bf (B)} is fulfilled and $X$ contains $A$ as a direct summand, then {\bf (A)} holds.
\end{enumerate}
\end{thm}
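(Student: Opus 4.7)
The plan is to prove part (1) as the main content and deduce part (2) as its converse via Lemma \ref{lem:Eiso}. For part (1), I would first reduce to the case where $X$ is $\nu$-stable: when $X$ is basic, Proposition \ref{prop:bs} provides $\nu$-stability, upon observing that the hypotheses (in particular (A4)) imply $\gldim(\tails A) < \infty$. A crucial preliminary observation is that $A \in \add\{X(j)\}$: since $A \in \CM A$ and $\uExt^i_A(X, A) = 0$ for all $i \geq 1$, the cluster tilting characterization forces this. By Lemma \ref{lem:st} and the duality \eqref{odual}, $X^\da$ is $\nu$-stable and $(d-1)$-cluster tilting in $\CM{A^{\op}}$, so by \eqref{opiso} the setup is symmetric between $A$ and $A^{\op}$, reducing every left-handed condition to a right-handed one.

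I would establish $\tails B \cong \tails A$ by applying Theorem \ref{thm.AZ} to the algebraic triple $(\tails A, \pi X, (1))$. Noetherianity and finite-dimensionality of Hom-spaces follow from $A$ satisfying $\chi$. Ampleness (Am1) is immediate from $A \in \add\{X(j)\}$ combined with the classical ampleness of $(\pi A, (1))$. The verification of (Am2) is where condition (A4) enters: for any $M \in \grmod A$, the MCM approximation $0 \to L \to Z \to M \to 0$ gives $\uExt^1_A(X, M) \cong \uExt^1_A(X, Z)$ (since $\uExt^{\geq 1}_A(X, L) = 0$ by Theorem \ref{thm:CMapp}), which is finite-dimensional by (A4); hence $\uExt^1_A(X, M(n)) = 0$ for $n \gg 0$, yielding (Am2). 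Since $\depth X = d \geq 2$, Hom-spaces in $\tails A$ agree with those in $\grmod A$, and by (A3) the associated $\N$-graded algebra is precisely $B$. This gives right noetherianity of $B$ and the equivalence $\tails B \cong \tails A$; the parallel construction on $A^{\op}$ via $X^\da$ gives left noetherianity and $\tails B^{\op} \cong \tails A^{\op}$.

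Next, $\gldim B \leq d$ comes from $(d-1)$-cluster tilting: for any $M \in \CM A$, one builds a resolution of length $d - 2$ in $\add\{X(j)\}$, and applying $\uHom_A(X, -)$—exact because $\uExt^i_A(X, X) = 0$ for $0 < i < d - 1$—yields a projective $B$-resolution of $\uHom_A(X, M)$ of length $\leq d - 2$. Combining with MCM approximation and Lemma \ref{lem:fg} extends this to $\gldim B \leq d$.

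The hardest step will be verifying ASF-regularity, namely $\RuG_{\m}(B) \cong D(B)(\ell)[-d]$ in $\D{\GrMod B^e}$. I plan to compute $\RuG_{\m}(B)$ by identifying $B = \uHom_A(X, X)$ and interchanging local cohomology with the Hom functor on the $A$-side, then exploit the balanced dualizing complex $D\RuG_{\m}(A) \cong A_\nu(-\ell)[d]$ together with $\nu$-stability $X_\nu \cong X$ to absorb the Nakayama twist into $X$ and land at $D(B)(\ell)[-d]$; two-sidedness follows from the parallel argument on $B^{\op}$ via $X^\da$, and the lower bound $\gldim B \geq d$ is then automatic from this ASF-regular structure. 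For part (2), given (B) and that $A$ is a summand of $X$, Lemma \ref{lem:Eiso} identifies $A = \uEnd_{B^{\op}}(X) = eBe$ for an idempotent $e \in B$ corresponding to the summand. AS-Gorensteinness (A1) for $A$ is obtained by transferring ASF-regularity of $B$ through the equivalences and the corner description; the cluster tilting property (A2) of $X$ follows by transporting the generating structure of $\grmod B$ through $- \otimes_B X$ with $\Ext$-vanishing controlled by $\gldim B = d$; and (A3)--(A4) are immediate from (B1).
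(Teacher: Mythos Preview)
Your overall architecture matches the paper's: both establish the equivalence via Artin--Zhang ampleness (with (A4) powering (Am2) through the MCM approximation of Theorem~\ref{thm:CMapp}), bound $\gldim B$ via iterated $\add\{X(j)\}$-approximations, and handle the left-sided statements by passing to $X^\da$ and $A^{\op}$ through \eqref{odual}--\eqref{opiso}. Two points, however, need correction.

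First, a logical ordering issue: you invoke Proposition~\ref{prop:bs} at the outset by asserting that (A4) implies $\gldim(\tails A) < \infty$. This implication is not available a priori; the paper only records the converse (via \cite[Lemma~5.7]{U5}) and in fact obtains $\gldim(\tails A) < \infty$ only \emph{after} proving $\tails A \cong \tails B$ and $\gldim B < \infty$. Since $\nu$-stability is not used until the very last step, this is easily repaired by postponing the reduction. Relatedly, your plan to extract $\gldim B \geq d$ from the ASF-regular structure is circular as stated; the paper instead exhibits $\uHom_A(X,k)$ as a module of projective dimension exactly $d$.

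Second, and more substantively, your plan for ASF-regularity --- ``interchange local cohomology with the Hom functor on the $A$-side'' --- does not correspond to a valid identity: $\RuG_{\m_B}$ is not $\RuG_{\m_A}$, and there is no formula of the shape $\RuG_{\m_B}(\uHom_A(X,X)) \cong \RuHom_A(X,\RuG_{\m_A}(X))$. The paper's route is different and considerably more delicate. It computes $\uH^i_{\m}(B)$ degree by degree: the low degrees vanish because the natural map $B \to \uEnd_{\cB}(\cB)$ is an isomorphism; the middle degrees vanish via the transferred identity $\uExt^i_{\cB}(\cB,\cB)\cong \uExt^i_{\cA}(\cX,\cX) \cong \uExt^i_A(X,X)=0$ for $0<i<d-1$; and the top degree is identified by showing, through the Serre functor on $\Db{\tails A}$ \cite[Theorem~A.4]{dNvB} together with $\nu$-stability, that $\cB(-\ell)$ is a dualizing sheaf in the sense of \cite{YZ}, whence $D\uH^d_{\m}(B) \cong B(-\ell)$ in $\GrMod B$. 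The bimodule compatibility (that $\uH^d_{\m}(B)\cong \uH^d_{\m^{\op}}(B)$ in $\GrMod B^e$) then comes from the commutation $\RuG_{\m}\RuG_{\m^{\op}}\cong \RuG_{\m^{\op}}\RuG_{\m}$ of Lemma~\ref{lem:lc}, which your sketch omits. Your outline of part~(2) is also much lighter than what is actually needed: the key technical inputs there are that $B/(e)$ is finite-dimensional (via \cite[Lemma~3.17]{MU2}), a hypercohomology spectral sequence combined with Lemma~\ref{lem:ld2} to annihilate $\RuHom_B(C,eB)$ for the cone $C$ of $Be\Lotimes_{eBe}eB\to B$, and the Dong--Wu criterion \cite[Theorem~3.6]{DW} to conclude AS-Gorensteinness of $A$.
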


\vspace{2truemm} \noindent
{\bf Proof of $(1)$ in \thmref{thm:main}.}
Suppose that $A$ and $X$ satisfy {\bf (A)}, and $X$ is either $\nu$-stable or basic.
Since $A$ is indecomposable, {\bf (A2)} implies that $X$ is a graded maximal Cohen-Macaulay $A$-module containing a shift of $A$ as a direct summand.
By properties of degree shifts, we may assume that $X$ contains $A$ as a direct summand without loss of generality.

The proof is divided into the following several steps:
\begin{enumerate}
\item[(a)] $(\cX,(1))$ is ample for $\tails A$,
\item[(b)] $B$ is right noetherian, and {\bf (B2)} holds,
\item[(c)] $\gldim B = d$,
\item[(d)] $\uH_{\m}^{i}(B)=0$ for any $i \neq d$,
\item[(e)] $(\cX^\da,(1))$ is ample for $\tails A^{\op}$,
\item[(f)] $B$ is left noetherian, and {\bf (B3)} holds,
\item[(g)] $\gldim B^{\op} = d$,
\item[(h)] $\uH_{\m^{\op}}^{i}(B)=0$ for any $i \neq d$, and
\item[(i)] $\uH_{\m}^{d}(B) \cong \uH_{\m^{\op}}^{d}(B) \cong (DB)(\ell)$ in $\GrMod B$ and in $\GrMod B^{\op}$.
\end{enumerate}

\vspace{2truemm} 
{\bf Proof of (a)}:
We show that $(\cX,(1))$ is ample for $\tails A$.
Since $A$ is a direct summand of $X$, it is easy to check that the condition (Am1) is satisfied.
Let $f:\cM \to \cN$ be an epimorphism in $\tails A$.
It gives a short exact sequence
$0 \to \cK \to \cM \to \cN \to 0$ in $\tails A$.
We take a finitely generated graded module $K$ such that $\pi K=\cK$.
By \thmref{thm:CMapp}, there exists an exact sequence
\[\xymatrix@C=1pc@R=1pc{
0 \ar[r] &L \ar[r] &Z \ar[r] & K \ar[r] &0} \]
in $\grmod A$ such that $Z \in \CM A$ and $\injdim_A L<\infty$.
Furthermore \thmref{thm:CMapp} also implies that $\uExt^1_A(X,K) \cong \uExt^1_A(X,Z)$ holds, so $\uExt^1_A(X,K)$ is finite dimensional over $k$ by {\bf (A4)}.
By \cite [Corollary 7.3 (2)]{AZ}, it follows that $\uExt^1_{\cA}(\cX, \cK)$ is right bounded,
and thus we have $\Ext^1_{\cA}(\cX(-n), \cK)=\uExt^1_{\cA}(\cX, \cK)_n=0$ for all $n\gg 0$.  Since 
$$\Hom_{\cA}(\cX(-n), \cM)\to \Hom_{\cA}(\cX(-n), \cN)\to \Ext^1_{\cA}(\cX(-n), \cK)$$ is exact,
$\Hom_{\cA}(\cX(-n), \cM)\to \Hom_{\cA}(\cX(-n), \cN)$ is surjective for all $n\gg 0$, so the condition (Am2) is also satisfied, hence $(\cX, (1))$ is ample for $\tails A$.

\vspace{2truemm} 
{\bf Proof of (b)}:
We show that $B$ is right noetherian and {\bf (B2)} is satisfied.
The basic idea of the proof comes from \cite[Section 2]{MU1}.
Since $\depth_A X=d\geq 2$, we have 
$$B= \uEnd_A(X)=B(\grmod A, X, (1))\cong B(\tails A, \cX, (1))$$
by \cite [Lemma 3.3]{Mmc}.
By using {\bf (A3)} and \thmref{thm.AZ}, it follows that $B = B_{\geq 0}\cong  B(\tails A, \cX, (1))_{\geq 0}$
is right noetherian locally finite.
Moreover the functor 
$$ F:= \pi \circ \uHom_{\cA}(\cX, -):\tails A\to \tails B$$ is an equivalence.
For $M\in \grmod A$, there exists $n\in \Z$ such that
$$\uHom_{\cA}(\cX, \cM)_{\geq n}\cong \uHom_A(X, M)_{\geq n}$$
in $\grmod B$ by \cite [Corollary 7.3 (2)]{AZ}, so the functor $F$ is induced by the functor $\uHom_A(X, -):\grmod A\to \grmod B$.
By using \lemref{lem:Eiso}, we see that the functor $\tails B \to \tails A$ induced by $- \otimes_B X:\grmod B\to \grmod A$ is an equivalence functor quasi-inverse to $F$.

\vspace{2truemm}
{\bf Proof of (c)}:
Here we show that $\gldim B = d$.
First let us explain that we can construct a graded right $\add\{X(i)|i\in \Z\}$-approximation of $M \in \grmod A$.
Since $\uHom_A(X,M)$ is a finitely generated graded right $B$-module by \lemref{lem:fg}, we can take 
$f_i \in \Hom_{\GrMod A} (X(s_i),M)$ such that $f_1,\cdots,f_n$ generate $\uHom_{A} (X,M)$.
Thus for any $f \in \Hom_{\GrMod A} (X(t),M)$, there exist graded homomorphisms $g_i \in \Hom_{\GrMod A} (X(t), X(s_i))$ such that
\[\xymatrix@C=1pc@R=1pc{
X(s_1) \oplus \cdots \oplus X(s_n) \ar[rrrr]^(0.6){\left(\begin{smallmatrix} f_1 &\dots &f_n \end{smallmatrix}\right)} &&&&M\\
&& X(t) \ar[rru]_{f} \ar[llu]^{\left(\begin{smallmatrix} g_1 \\ \vdots \\g_n \end{smallmatrix}\right)}
} \]
commutes. We can check that $\phi := \begin{pmatrix} f_1 &\dots &f_n \end{pmatrix} : \bigoplus_{i=1}^{n}X(s_i) \to M$ is surjective, and 
\[\xymatrix@C=2pc@R=1pc{
\uHom_{A}(X,\bigoplus_{i=1}^{n}X(s_i))  \ar[rr]^(0.55){\uHom(X,\phi)} && \uHom_{A}(X,M)
}\]
is also surjective.

By the above arguments, the proof of $\gldim B \leq d$ goes along the same line as that of \cite[Theorem 3.6]{DH} (see also \cite[Theorem 5.10]{U5}).
Let $N \in \grmod B$ and
take a projective presentation $P_1\to P_0 \to N \to 0$.
Since we can write $P_i \cong \uHom_{A}(X,X_i)$ where $X_i \in \add\{X(i)|i\in \Z\}$ for each $i$,
we have an exact sequence
\[\xymatrix@C=1.2pc@R=1pc{
0 \ar[r] &\uHom_{A}(X,M_1) \ar[r] &\uHom_{A}(X,X_1) \ar[r] &\uHom_{A}(X,X_0) \ar[r] & N \ar[r] &0} \]
in $\grmod B$ such that $0\to M_1\to X_1\to X_0$ is exact in $\grmod A$.
By using a graded right $\add\{X(i)|i\in \Z\}$-approximation, we have a module $X_2 \in \add\{X(i)|i\in \Z\}$ and a surjection $X_2 \to M_1$ such that 
$\uHom_{A}(X,X_2) \to \uHom_{A}(X,M_1)$ is also surjective. Let $M_2$ the kernel of $X_2 \to M_1$. Then it is easy to see that $\uExt^1_A(X,M_2)=0$.
Continuing in this way inductively, we can make exact sequences
\begin{align*} 
\xymatrix@C=1.2pc@R=1pc{
0\ar[r]&M_{d-1}\ar[r]&X_{d-1}\ar[r] &\cdots \ar[r] &X_2 \ar[r]&X_1 \ar[r]^{\xi} &X_0 \ar[r] & \Coker\xi \ar[r] &0
}
\end{align*}
in $\grmod A$, and
\begin{align*}
\xymatrix@C=1.2pc@R=1pc{
0 \ar[r] &\uHom_{A}(X,M_{d-1})\ar[r]& P_{d-1} \ar[r] &\cdots \ar[r] &P_2\ar[r] &P_1 \ar[r] &P_0 \ar[r] &N \ar[r] &0
}
\end{align*}
in $\grmod B$ where $P_i = \uHom_{A}(X,X_i)$. Furthermore we see $\uExt^{j}_A(X,M_i)=0$ for any $2\leq i\leq d-1$ and any $0<j<i$.
If $M_{d-1}=0$, then clearly $\projdim_B N \leq d-1$.
We now consider the case $M_{d-1} \neq 0$.
Since $X_i \in \CM A$, it follows from the depth lemma (cf. \cite[Proposition 1.2.9]{B}) that $M_{d-1}$ is graded maximal Cohen-Macaulay.
Moreover, since $\uExt^{j}_A(X,M_{d-1})=0$ for $0<j<d-1$, it follows that $M_{d-1} \in \add\{X(i)|i\in \Z\}$ by {\bf (A2)}.
Thus we obtain $\projdim_B N \leq d$.

To see that $\gldim B = d$, consider a graded projective resolution of $\uHom_A(X,k)$ in $\grmod B$, namely,
\begin{align*}
\xymatrix@C=1.2pc@R=1pc{
\cdots \ar[r] &P_2\ar[r] &P_1 \ar[r] &P_0 \ar[r] &\uHom_A(X,k) \ar[r] &0.
}
\end{align*}
Applying $- \otimes_B X$ to the above exact sequence, we have an exact sequence
\begin{align*}
\xymatrix@C=1.2pc@R=1pc{
\cdots \ar[r] &P_2\otimes_B X \ar[r] &P_1\otimes_B X \ar[r] &P_0\otimes_B X \ar[r] &\uHom_A(X,k)\otimes_B X \ar[r] &0
}
\end{align*}
in $\grmod A$. By \lemref{lem:Eiso}, $\uHom_A(X,k)\otimes_B X \cong k$.
Since each $P_i \otimes_B X$ is a direct summand of finite direct sums of shifts of $X$,
it is graded maximal Cohen-Macaulay,
so $\projdim_B \uHom_A(X,k) \leq d-1$ gives a contradiction to the fact that $\depth_A k=0$. Thus $\gldim B=d$.

\vspace{2truemm}
{\bf Proof of (d)}:
We next show that $\uH_{\m}^{i}(B)=0$ for $i \neq d$.
By the arguments in the proof of (a) and (b), we see that
\[\xymatrix@C=2pc@R=2pc{
(\grmod A, X, (1)) \ar[r]^{\pi} & (\tails A, \cX, (1)) \ar[d]^{F}_{\cong} \\
(\grmod B, B, (1)) \ar[r]^{\pi} \ar[u]^{-\otimes_B X} & (\tails B, \cB, (1)),
}\]
commutes, and the graded algebra homomorphism
$$\uEnd_B(B) = B(\grmod B, B, (1)) \to B(\tails B, \cB, (1)) = \uEnd_\cB(\cB)$$
induced by the natural functor $\pi$ is an isomorphism.
This says that the natural map $\vph$ appearing in the exact sequence
\[ \xymatrix@C=1pc@R=1pc{
0 \ar[r]& \uH_{\m}^{0}(B) \ar[r]& \uEnd_B(B) \ar[r]^{\vph}& \uEnd_\cB(\cB) \ar[r]& \uH_{\m}^{1}(B) \ar[r]& 0 
}\]
in \cite[Proposition 7.2 (2)]{AZ} is an isomorphism. Thus $\uH_{\m}^{0}(B) = \uH_{\m}^{1}(B) =0$.

Since we already have the equivalence functor in {\bf (B2)}, it follows that 
\begin{align} \label{eqb4}
\uExt^i_\cB(\cB,\cB) \cong \uExt^i_\cA(\cX,\cX)
\end{align}
for any $i$.
Using $\depth_A X = d$ and {\bf (A2)}, we have 
$\uExt^i_\cA(\cX,\cX) \cong \uExt^i_A(X,X)=0$
for any $1 \leq i\leq d-2$, so $\uExt^i_\cB(\cB,\cB)=0$ for any $1 \leq i\leq d-2$.
Thus $\uH_{\m}^{i}(B)=0$ for $2 \leq i\leq d-1$ by \cite[Theorem 7.2 (2)]{AZ}.
Furthermore $B$ has global dimension $d$ by (c),
so $\uH_{\m}^{i}(B)=0$ for $i\geq  d+1$.

\vspace{2truemm} 
{\bf Proof of (e)}: By the duality (\ref{odual}), we see that {\bf (A)} is equivalent to the following:

{\bf (A$^{\op}$):} $A^{\op}$ and $X^\da$ satisfy
\begin{description}
\item[(A1$^{\op}$)] $A^{\op}$ is an AS-Gorenstein algebra of dimension $d\geq 2$ 
\item[(A2$^{\op}$)] $X^\da$ is a $(d-1)$-cluster tilting module, and
\item[(A3$^{\op}$)] $\uEnd_{A^{\op}}(X^\da)_{<0}=0$,
\item[(A4$^{\op}$)] $\uExt^1_{A^{\op}}(N,X^\da)$ and $\uExt^1_{A^{\op}}(X^\da,N)$ are finite dimensional over $k$ for any $N \in \CM {A^{\op}}$.
\end{description}
Hence the same argument in the proof of (a) implies that $(X^\da,(1))$ is ample for $\tails A^{\op}$.

\vspace{2truemm} 
{\bf Proof of (f)}:
Since $(X^\da,(1))$ is ample for $\tails A^{\op}$, It follows from the same argument in the proof of (b) that
$C := \uEnd_{A^{\op}}(X^\da)$ is right noetherian and $- \otimes_C X^\da$ induces
an equivalence functor $\tails C \xrightarrow{\sim} \tails A^{\op}$. However, $B^{\op} \cong C$ as graded algebras by
(\ref{opiso}), so we obtain that $B$ is left noetherian and $X^\da \otimes_B -$ induces
an equivalence functor $\tails B^{\op} \xrightarrow{\sim} \tails A^{\op}$.

\vspace{2truemm}
{\bf Proof of (g) and (h)}:
By the arguments in the proofs of (e) and (f), 
the proof of (g) (resp. (h)) is obtained by the same way as that of (c) (resp. (d)).

\vspace{2truemm}
{\bf Proof of (i)}: 
By (b) and (c), $\tails A \cong \tails B$ and $\gldim B < \infty$, so it follows that $\tails A$ has finite global dimension.
Thus the derived category $\Db{\tails A}$ has the Serre functor
$-\otimes_\cA \om_\cA[d-1] \cong -\otimes_\cA \cA_\nu(-\ell)[d-1] $ by \cite[Theorem A.4]{dNvB}.
Moreover, if $X$ is basic, then it is $\nu$-stable by \propref{prop:bs}.
Using the equivalence in {\bf (B2)} and the fact that $X$ is $\nu$-stable, for any $\cM= \pi M \in \tails B$,
we have natural isomorphisms
\begin{align*}
\uHom_\cB(\cM,\cB(-\ell ))
&\cong \uHom_\cA(\pi (M\otimes_B X) ,\pi X(-\ell ))\\
&\cong \uHom_\cA(\pi (M\otimes_B X_{\nu^{-1}}) ,\pi X_{\nu^{-1}}(-\ell ))\\
&\cong \uHom_\cA(\pi (M\otimes_B X_{\nu^{-1}}) ,\pi X(-\ell )) &&(\text{by \lemref{lem:st}})\\
&\cong D\uExt^{d-1}_\cA(\pi X(-\ell ), \pi (M\otimes_B X_{\nu^{-1}}\otimes_A A_{\nu}(-\ell ) ))\\
&\cong D\uExt^{d-1}_\cA(\pi X(-\ell ), \pi (M\otimes_B X)(-\ell ))\\
&\cong D\uExt^{d-1}_\cB(\cB, \cM)
\end{align*}
as graded $k$-vector spaces. It follows that $\cB(-\ell )$ is a dualizing sheaf of $\tails B$ in the sense of \cite{YZ}.
By \cite[Proposition 7.10 (3)]{AZ}, it is easy to check that $\cd(\tails B) =d-1$. 
Moreover, by \thmref{thm.AZ}, we see that $B$ is a right noetherian locally finite graded algebra satisfying $\chi_1$,
so it follows that
\[ \Hom_\cB(\cB, \cB(-\ell)) \cong D\uExt^{d-1}_\cB(\cB, \cB) \]
in $\GrMod B$ by the proof of \cite[Theorem 2.3 (2)]{YZ}. Hence we obtain
\begin{align} \label{eq:lc1}
 B(-\ell) \cong \Hom_\cB(\cB, \cB)(-\ell) \cong D\uExt^{d-1}_\cB(\cB, \cB) \cong D\uH_{\m}^{d}(B) 
\end{align}
in $\GrMod B$ by (d). Dually, (f), (g), (h), and \lemref{lem:st} imply
\begin{align}\label{eq:lc2}
 B(-\ell) \cong D\uH_{\m^{\op}}^{d}(B)
\end{align}
in $\GrMod B^{\op}$.
In addition, we see that $\uH_{\m}^{d}(B)$ and $\uH_{\m^{\op}}^{d}(B)$ are right bounded graded $B^e$-modules,
so it follows from \lemref{lem:lc} that
\begin{align*}
\uH_{\m^{\op}}^{d}(B)
&\cong \RuG_{\m}(\uH_{\m^{\op}}^{d}(B))
\cong \RuG_{\m}(\uH_{\m^{\op}}^{d}(B)[-d])[d]\\
&\cong \RuG_{\m}(\RuG_{\m^{\op}}(B))[d]
\cong \RuG_{\m^{\op}}(\RuG_{\m}(B))[d]\\
&\cong \RuG_{\m^{\op}}(\uH_{\m}^{d}(B)[-d])[d]
\cong \RuG_{\m^{\op}}(\uH_{\m}^{d}(B))
\cong \uH_{\m}^{d}(B)
\end{align*}
in $\D{\GrMod B^e}$. Therefore our assertion follows from (\ref{eq:lc1}) and (\ref{eq:lc2}).

\vspace{2truemm}
Hence the proof of \thmref{thm:main} (1)  is now complete.\qed

\vspace{2truemm} \noindent
{\bf Proof of $(2)$ in \thmref{thm:main}.}
Suppose that $X$ satisfies {\bf (B)}, and contains $A$ as a direct summand.
Clearly {\bf (A3)} is satisfied by {\bf (B1)}.

First we show that {\bf (A1)} holds.
Since $A$ is a direct summand of $X$, there exists an idempotent $e \in B$ such that
$eBe \cong \uEnd_A(A) \cong A$ as graded algebras and $Be \cong \uHom_A(A,X) \cong X$ as graded $B$-$A$ bimodules. 
Then {\bf (B2)} says that the functor $\tails B \to \tails eBe$ induced by $-\otimes_B Be$ is an equivalence, so it follows that
$B/(e)$ is finite dimensional over $k$ by \cite[Lemma 3.17]{MU2}.
Let $\ro$ be the composition map 
$$Be \Lotimes_{eBe}eB \xrightarrow[\text{nat.}]{} Be \otimes_{eBe}eB \xrightarrow[\text{mult.}]{} B.$$
We have the triangle
\[Be \Lotimes_{eBe}eB \xrightarrow{\ro} B \to C \to Be \Lotimes_{eBe}eB[-1]\]
in $\D{\grmod B}$. Applying $-\Lotimes_{B}Be$ implies $C\Lotimes_{B}Be=0$.
It follows that $h^i(C)e = 0$ for all $i$, and thus $\uExt^j_B(h^i(C), eB)=0$ for all $i,j$ by \lemref{lem:ld2}.
By using a hypercohomology spectral sequence \cite[5.7.9]{Wei}, we obtain $h^p(\RuHom_B(C, eB))=0$ for all $p$, so $\RuHom_B(C, eB)=0$.
Applying $\RuHom_B(-, eB)$ to the above triangle induces
\[ \RuHom_B(Be \Lotimes_{eBe}eB, eB) \cong \RuHom_B(B, eB) \cong eB.\] 
On the other hand, we have 
\begin{align*}
\RuHom_B(Be \Lotimes_{eBe}eB, eB) \cong \RuHom_{eBe}(Be, \RuHom_B(eB, eB)) \cong \RuHom_{eBe}(Be, eBe),
\end{align*}
so it follows that 
\begin{align} \label{eq:CM}
\uExt^i_A(X,A)\cong \uExt^i_{eBe}(Be, eBe)=0
\end{align}
for all $i>0$.

Let $M \in \grmod A$. Since $\gldim B=d$, we can take a projective resolution
\begin{align*}
\xymatrix@C=1.2pc@R=1pc{
0 \ar[r] &P_m\ar[r] & \cdots \ar[r] &P_2\ar[r] &P_1 \ar[r] &P_0 \ar[r] &M\otimes_{eBe}eB \ar[r] &0
}
\end{align*}
in $\grmod B$ with $m \leq d$.
Applying $-\otimes_B Be$ to the above exact sequence, we have an exact sequence
\begin{align*}
\xymatrix@C=1.2pc@R=1pc{
0 \ar[r] &P_m\otimes_B Be\ar[r] & \cdots \ar[r] &P_2\otimes_B Be\ar[r] &P_1\otimes_B Be \ar[r] &P_0\otimes_B Be \ar[r] &M \ar[r] &0.
}
\end{align*}
Since each $P_j\otimes_B Be \in \add\{Be(i)|i \in \Z\} \cong \add\{X(i)|i \in \Z\}$, By (\ref{eq:CM}), we see that $\uExt_A^{m+1}(M,A)=0$.
Thus $\injdim_A A\leq d$.
By {\bf (B1)}, {\bf (B2)}, and {\bf (B3)}, $\gldim(\tails A)< \infty$ and $\gldim(\tails A^{\op})< \infty$.
Moreover $A$ satisfies $\chi$ on both sides, so it has a dualizing complex by \cite[Theorem 6.3]{Vet}.
It follows from \cite[Theorem 3.6]{DW} that $A$ is AS-Gorenstein.
If $\injdim_A A\leq d-1$, then $\gldim (\tails B) = \gldim (\tails A) \leq d-2$ by the Serre duality \cite[Theorem A.4]{dNvB}.
This is a contradiction to the fact that $\uExt^{d-1}_\cB(\cB,\cB) \cong \uH_{\m}^d(B) \neq 0$.
Hence $A$ is AS-Gorenstein of dimension $d$.

To show that {\bf (A2)} holds, it is enough to show that
\begin{enumerate}
\item[(a)] $X \in \CM A$ and $\uExt_{A}^i(X,X)=0$ for any $0<i<d-1$, 
\item[(b)] $M \in \CM A$ satisfying $\uExt^i_{A}(X,M)=0$ for any $0<i<d-1$ belongs to $\add\{X(i)|i\in \Z\}$, and
\item[(c)] $M \in \CM A$ satisfying $\uExt^i_{A}(M,X)=0$ for any $0<i<d-1$ belongs to $\add\{X(i)|i\in \Z\}$.
\end{enumerate}
By (\ref{eq:CM}), we see that $X \in \CM A$.
By {\bf (B2)}, the functor $\tails B \to \tails A$ induced by $- \otimes_A X$ is an equivalence,
so $\uExt^i_\cA(\cX,\cX) \cong \uExt^i_\cB(\cB,\cB)$ for any $i$.
Using $\depth_A X = d$ and \cite[Theorem 7.2 (2)]{AZ}, it follows that 
$$\uExt^i_A(X,X) \cong \uExt^i_\cA(\cX,\cX) \cong \uExt^i_\cB(\cB,\cB) \cong \uH_{\m}^{i+1}(B)$$
for any $0 < i < d-1$.
Hence (a) holds by {\bf (B1)}.

We now give the proof of (b). Let $M \in \CM {A}$ be such that $\uExt^i_{A}(X,M)=0$ for any $0<i<d-1$.
Since we know that $A$ is AS-Gorenstein, taking a free resolution of $M^{\da}$ in $\grmod A^{\op}$ and applying $(-)^\da$,
we have an exact sequence 
\begin{align*}
\xymatrix@C=1pc@R=1pc{
0\ar[r]&M \ar[r]& F_0 \ar[r] &\cdots  \ar[r]&F_{d-3} \ar[r] &Y \ar[r] &0
}
\end{align*}
in $\grmod A$ where each $F_i$ is a graded free $A$-module, and $Y \in \CM A$.
Then we can make an exact sequence 
\begin{align*}
\xymatrix@C=1pc@R=1pc{
0\ar[r]&\uHom_A(X, M) \ar[r]&\uHom_A(X, F_0) \ar[r] &\cdots  \ar[r]&\uHom_A(X, F_{d-3}) \ar[r] &\uHom_A(X, Y) \ar[r] &0
}
\end{align*}
in $\GrMod B$ because $\uExt^i_{A}(X,M)=0$ for $0<i<d-1$.
Moreover, taking a free presentation of $Y^\da \in \CM{A^{\op}}$ and applying $\Hom_A(X, (-)^\da)$,
we have an exact sequence
\[ \xymatrix@C=1pc@R=1pc{ 0\ar[r]&\Hom_A(X, Y) \ar[r]&\Hom_A(X, F_{d-2}) \ar[r]&\Hom_A(X, F_{d-1})} \]
in $\GrMod B$ where $F_{d-2}$ and $F_{d-1}$ are graded free $A$-modules.
By the assumption that $A$ is a direct summand of $X$, each $\uHom_A(X, F_i)$ is a graded right projective $B$-module. 
Since any graded right $B$-module has projective dimension at most $d$ by {\bf (B1)},
it holds that $\projdim_B \uHom_A(X, M)=0$.
This means that $\Hom_A(X, M)$ is finitely generated graded right projective over $B$ by \lemref{lem:fg},
so $M \in \add\{X(i)|i\in \Z\}$ by \lemref{lem:Eiso}.
Hence (b) is proved.

We next give the proof of (c). Let $M \in \CM {A}$ be such that $\uExt^i_{A}(M,X)=0$ for any $0<i<d-1$.
Then $\uExt^i_{A^{\op}}(X^\da,M^\da)=0$ for any $0<i<d-1$.
Since $B^{\op} \cong \uEnd_{A^{\op}}(X^\da)$ is also ASF-regular,
the same method in the proof of (b) yields $M^\da \in \add\{X^\da(i)|i\in \Z\}$.
Thus we see $M \in \add\{X(i)|i\in \Z\}$ and we obtain (c).

In addition, since we have $\gldim B< \infty$ and $\tails B \cong \tails A$ by {\bf (B1)} and {\bf (B2)},
it follows that $\gldim(\tails A)< \infty$,
so {\bf (A4)} is satisfied by \cite[Lemma 5.7]{U5}.

Hence the proof of \thmref{thm:main} (2) is finished.\qed

\begin{rem} \label{rem:A4}
By observing the proof of \thmref{thm:main}, we notice that the condition {\bf (A4)} in \thmref{thm:main} can be replaced by the condition
$$ \text{\bf (A4'):} \gldim(\tails A) < \infty.$$
In this sense, it can be said that {\bf (A4)} corresponds to the graded isolated singularity property.
\end{rem}

Here we obtain the following result.

\begin{cor} \label{cor:main}
Let $A$ be an AS-Gorenstein algebra of dimension $d\geq 2$ and of Gorenstein parameter $\ell$.
Assume that $\gldim(\tails A)< \infty$ and $A$ has a $(d-1)$-cluster tilting module $X \in \CM {A}$ satisfying $\uEnd_A(X)_{<0}=0$.
Then a basic $(d-1)$-cluster tilting module $Y$ can be extracted from $X$,
and besides $B=\uEnd_A(Y)$ is a two-sided noetherian AS-regular algebra over $B_0$ of dimension $d$ and of Gorenstein parameter $\ell$ such that $\tails B \cong \tails A$.
\end{cor}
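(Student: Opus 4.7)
The plan is to reduce the corollary directly to Theorem~\ref{thm:main}~(1), combined with Theorem~\ref{thm:com}. First, since $A$ is right noetherian locally finite $\N$-graded, $\grmod A$ has the Krull-Schmidt property. Decomposing $X$ into indecomposables and keeping one representative (up to isomorphism and degree shift of grading) per class, I obtain a basic direct summand $Y$ of $X$ satisfying $\add\{Y(i) \mid i \in \Z\} = \add\{X(i) \mid i \in \Z\}$. Because the $(d-1)$-cluster tilting property depends only on this additive closure, $Y \in \CM{A}$ is again a $(d-1)$-cluster tilting module; this gives the asserted ``extraction'' and provides condition \textbf{(A2)} for $Y$.

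Next I check the remaining hypotheses of Theorem~\ref{thm:main}~(1) for the pair $(A,Y)$. Condition \textbf{(A1)} is part of the hypothesis. For \textbf{(A3)}: writing $X = Y \oplus Y'$ yields a degree-zero idempotent $e \in \uEnd_A(X)$ with $\uEnd_A(Y) \cong e\,\uEnd_A(X)\,e$; hence $\uEnd_A(Y)_n$ embeds into $\uEnd_A(X)_n$ for every $n$, and $\uEnd_A(X)_{<0} = 0$ forces $\uEnd_A(Y)_{<0} = 0$. For \textbf{(A4)}: by Remark~\ref{rem:A4}, it suffices to know that $\gldim(\tails A) < \infty$, which is given (one may alternatively invoke \cite[Lemma~5.7]{U5} directly). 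Finally, $A$ being AS-Gorenstein admits a balanced dualizing complex and therefore satisfies $\chi$ on both sides, so the global assumption of Theorem~\ref{thm:main} is met. The constructed $Y$ is basic by design.

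Applying Theorem~\ref{thm:main}~(1) now yields \textbf{(B)}: $B := \uEnd_A(Y)$ is two-sided noetherian ASF-regular of dimension $d$, and $-\otimes_B Y$ induces an equivalence $\tails B \xrightarrow{\sim} \tails A$. Theorem~\ref{thm:com} then upgrades ``ASF-regular'' to ``AS-regular over $B_0$'' of the same dimension $d$ and the same Gorenstein parameter. To confirm that this Gorenstein parameter is exactly the parameter $\ell$ of $A$, I refer to step~(i) of the proof of Theorem~\ref{thm:main}, where one computes $\uH^{d}_{\m}(B) \cong D(B)(\ell)$ with the shift $\ell$ inherited from the Serre functor $-\otimes_\cA \cA_\nu(-\ell)[d-1]$ of $\Db{\tails A}$, which in turn is read off the balanced dualizing complex of $A$; thus the Gorenstein parameter of $B$ coincides with $\ell$.

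The only slightly delicate point is the transfer of \textbf{(A3)} through the basic extraction, which is settled cleanly by Krull-Schmidt and the corner-algebra description of $\uEnd_A(Y)$. Beyond that, the argument is a direct packaging of Theorems~\ref{thm:main} and~\ref{thm:com} together with tracking of the Gorenstein parameter, and no additional estimates are needed.
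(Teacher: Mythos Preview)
Your proof is correct and follows essentially the same approach as the paper's own proof, which simply cites \thmref{thm:main}, \remref{rem:A4}, and \thmref{thm:com} together with the observation from step~(i) of the proof of \thmref{thm:main}(1) that the Gorenstein parameter of $B$ equals $\ell$. You have filled in more explicit details---the Krull--Schmidt extraction of $Y$, the corner-algebra argument for transferring \textbf{(A3)}, and the verification of $\chi$---but the underlying strategy is identical.
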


\begin{proof}
By the proof of \thmref{thm:main}(1), we see that Gorenstein parameter of $B$ is given by Gorenstein parameter of $A$, 
so the statement follows from \thmref{thm:main}, \remref{rem:A4} and \thmref{thm:com}.
\end{proof}

It is clear that {\bf (A2)}, {\bf (A3)}, {\bf (A4)} and the $\nu$-stable assumption are conditions for a ``right'' $A$-module $X$.
On the other hand, we see that {\bf (B1)}, {\bf (B2)} and {\bf (B3)} are ``two-sided'' conditions for $B = \uEnd_A(X)$.
Thus \thmref{thm:main} (1) asserts that one-sided conditions for $X$ imply two-sided conditions for $B=\uEnd_A(X)$.

We now consider the case that $A$ is a noncommutative quotient singularity.

\begin{ex}
Let $S$ be an AS-regular algebra of dimension $d \geq 2$.
Let $G$ be a finite subgroup of $\GrAut S$ such that $\hdet g =1$ for each $g \in G$ (in the sense of J\o rgensen and Zhang \cite{JZ}), and $\fchar k$ does not divide $|G|$. 
Assume that $S*G/(e)$ is finite dimensional over $k$
where $e= \frac{1}{|G|} \sum_{g \in G} 1*g \in S*G$.
Then
\begin{itemize}
\item $S^G$ is AS-Gorenstein of dimension $d \geq 2$ by \cite[Theorem 3.3]{JZ},
\item $S \in \CM{S^G}$ is a $(d-1)$-cluster tilting module by \cite[Theorems 3.10, 3.15]{MU1},
\item $\uEnd_{S^G}(S)_{<0} = (S*G)_{<0} =0$ by \cite[Theorem 3.10]{MU1},
\item $\gldim(\tails S^G) < \infty$ by \cite[Theorem 3.10, Lemma 2.12]{MU1}, and
\item $S$ is a $\nu$-stable $S^G$-module by \exref{ex:st},
\end{itemize}
so the assumption of \thmref{thm:main} (1) is satisfied. If fact, it follows from \cite[Corollary 3.6, Theorem 3.10]{MU1} that
$B=\uEnd_{S^G}(S)$ is a two-sided noetherian AS-regular algebra over $B_0 \cong kG$ of dimension $d$ such that $\tails \uEnd_{S^G}(S) \cong \tails S^G$.
Hence \thmref{thm:main} is regarded as a detailed version of this phenomenon.
\end{ex}

For the rest of this paper, we construct a noncommutative quadric hypersurface which gives a concrete example of \thmref{thm:main}. 
See \cite[Section 5]{SV} and \cite[Section 4]{U6} for detailed information.

\begin{ex}
In this example, we assume that $k$ is algebraically closed of characteristic $0$.
Let
$$S= k\ang{x,y,z}/(xy+yx-z^2, xz+zx, yz+zy), \quad \deg x=\deg y=\deg z=1.$$
Then $S$ is a Koszul AS-regular algebra of dimension $d_S=3$ and Gorenstein parameter $\ell_S=3$ with a central regular element $x^2+y^2 \in S_2$.
Let $$A=S/(x^2+y^2).$$
Then $A$ is a Koszul AS-Gorenstein algebra of dimension $d_A=2$ and Gorenstein parameter $\ell_A=1$.
Take $w:= x^2 \in A^!_2$ so that $S^! \cong A^!/(w)$.
We define a finite dimensional algebra $C(A)$ by $C(A):= A^![w^{-1}]_0$ (see \cite[Lemma 5.1]{SV}).
It is easy to check that $C(A) \cong k[t]/(t^4-1) \cong k^4$ as algebras.
By \cite[Proposition 4.1]{U6}, $A$ is of finite Cohen-Macaulay representation type,
so it has a $1$-cluster tilting module.
It follows from \cite[Theorem 3.4]{U6} that $\gldim(\tails A)<\infty$.
By \cite[Proposition 4.4]{U6} and \cite[Theorem 3.8]{Mck},
indecomposable non-projective maximal Cohen-Macaulay $A$-modules
(up to isomorphism and degree shift of grading) are parameterized by points of
$\Proj A^! = \cV(xy+z^2,x^2-y^2) \subset \P^2$.
Using this, one can check that the graded $A$-module
$$ X:=A\oplus X_1\oplus X_2\oplus X_3\oplus X_4$$
is a representation generator of $\CM{A}$, that is, a $1$-cluster tilting module, where
\begin{align*}
&X_1:= A/(x-y+z)A, \quad X_2:= A/(x-y-z)A,\\
&X_3:= A/(x+y+iz)A, \quad X_4:= A/(x+y-iz)A
\end{align*}
and $i$ is a primitive 4th root of unity.
Clearly $X$ is basic, so it is $\nu$-stable.
Since generators of $X$ are concentrated in degree $0$, every graded $A$-module homomorphism $X \to X(-s)$ has to be zero for any positive integer $s \in \N^+$,
so we have $\uEnd(X)_{<0}=0$.
Hence we obtain that $B = \uEnd_A(X)$ is a two-sided noetherian AS-regular algebra over $B_0$ of dimension $2$ and of Gorenstein parameter $1$
satisfying $\tails B \cong \tails A$ by \corref{cor:main}.
We can calculate that the Hilbert series $H_B(t)$ is  $\frac{9(1+t)}{(1-t)^2}$.
Furthermore $B_0$ is isomorphic to the path algebra $kQ$ where
\[
Q=
\vcenter{
\xymatrix@C=0.8pc@R=0.02pc{
 \bullet \ar[rd]& &\bullet\ar[ld] \\
 &\bullet \\
 \bullet \ar[ru] & &\bullet \ar[lu]
}},
\]
so $\gldim B_0 < \infty$. Hence we obtain  %%%that $B$ is the preprojective algebra of the quiver $Q$, and 
\[\Db{\tails A} \cong \Db{\tails B} \cong \Db{\mod B_0} \cong \Db{\mod kQ} \]
by \cite[Theorem 4.14]{MM}.
As a remark, for any $2$-dimensional commutative Cohen-Macaulay algebra $C$ of finite Cohen-Macaulay representation type generated in degree $1$, it follows that
$\tails C \cong \tails k[x,y] \cong \coh \P^1$ by \cite[Theorem on page 347]{EH}, so
$\Db{\tails C} \cong \Db{\coh \P^1} \cong \Db{\mod kQ'}$
where
$
Q'=
\vcenter{
\xymatrix@C=2pc@R=1pc{
\bullet \ar@<0.7ex>[r] \ar@<-0.7ex>[r] &\bullet \\
}}.
$
Thus we see that $\Db{\tails A} \cong \Db{\tails B} \ncong \Db{\tails C}$.
\end{ex}

\section*{Acknowledgment} 
The author thanks the referee for his/her careful reading of the manuscript and helpful suggestions.

\end{document}